\documentclass[11pt]{amsart}

\usepackage{amsmath, amsthm, amssymb, amsfonts, enumerate}
\usepackage[colorlinks=true,linkcolor=blue,urlcolor=blue]{hyperref}
\usepackage{color}
\usepackage{geometry}

\geometry{hmargin=3.5cm, vmargin=2.5cm}

\newtheorem{theorem}{Theorem}[section]
\newtheorem{remark}[theorem]{Remark}
\newtheorem{assumption}[theorem]{Assumption}

\newtheorem{proposition}[theorem]{Proposition}
\newtheorem{corollary}[theorem]{Corollary}

\def \cB{{\mathcal B}}
\def \cC{{\mathcal C}}
\def \cF{{\mathcal F}}

\def \cP{{\mathcal P}}
\def \E{\mathsf{E}}
\def \P{\mathsf{P}}
\def \R{\mathbb{R}}

\def \G{\Gamma}
\DeclareMathOperator*{\esssup}{ess\,sup}
\def \mathds {{\bf}}

\title[Reflected BSDEs with interconnected obstacles]{A note on a new existence result for reflected BSDEs with interconnected obstacles}
\author[De Angelis, Ferrari, Hamad\`ene]{Tiziano De Angelis \and Giorgio Ferrari
\and Sa\"id Hamad\`ene}
\keywords{}
\address{T.~De Angelis: School of Mathematics, University of Leeds, Woodhouse Lane, LS2 9JT Leeds, UK.}
\email{\href{mailto:t.deangelis@leeds.ac.uk}{t.deangelis@leeds.ac.uk}}
\address{G.~Ferrari: Center for Mathematical Economics, Bielefeld University, Universit\"atsstrasse 25, 33615, Bielefeld, Germany.}
\email{\href{mailto:giorgio.ferrari@uni-bielefeld.de}{giorgio.ferrari@uni-bielefeld.de}}
\address{S.~Hamad\`ene: Le Mans Universit\'e, LMM, Avenue Olivier Messiaen, 72085 Le Mans, Cedex 9, France.}
\email{\href{mailto:hamadene@univ-lemans.fr}{hamadene@univ-lemans.fr}}
\date{\today}

\numberwithin{equation}{section}

\begin{document}

\begin{abstract}
In this note we prove existence of a solution to a system of Markovian BSDEs with interconnected obstacles. A key feature of our system, and the main novelty of this paper, is that we allow for the driver $f_i$ of the $i$-th component of the $Y$-process to depend on all components of the $Z$-process. This extends the existing theory on reflected BSDEs, which only addresses problems where $f_i$ depends on $Z^i$.
\end{abstract}

\maketitle

\section{Introduction}
\label{introduction}

In this note we study existence of a solution of a system of reflected backward stochastic differential equations (BSDEs) with inter-connected obstacles. Letting $T>0$ and $t\in [0,T]$, the problem is to find $m$ trebles of $(\cF_s)_{s\in [t,T]}$-adapted processes $(Y^i,Z^i,K^i)_{i\in \G}$, where $\G:=\{1,\dots,m\}$, $Y^i,\,K^i\in\R$ and $Z^i\in\R^d$, $d\ge1$, such that for any $i\in \G$ we have: $\forall s\in [t,T]$,
 \begin{equation}\label{main-systemintro}
\left\{
\begin{array}{l}
{Y}^{i}_s=h_i(X^{t,x}_T)+\int^T_sf_i(r,X^{t,x}_r,({Y}_r^{k})_{k\in \G},({Z}^{k}_r)_{k\in \G})dr+{K}^{i}_T-{K}^{i}_s-\int^T_s{Z}^{i}_rdB_r\,\\[+4pt]
      {Y}^{i}_s\geq \max\limits_{j\neq i}\{{Y}^{j}_s-{g}_{ij}(s,X^{t,x}_s)\}\\[+4pt]
\int_t^T( {Y}^{i}_s-\max\limits_{j\neq i}\{{Y}^{j}_s-{g}_{ij}(s,X^{t,x}_s)\})dK^i_s=0
\end{array}
\right.
\end{equation}
where:
\begin{itemize}
\item[ i)] $B$ is a $d$-dimensional Brownian motion and we denote $Z^i=(Z^{i1},Z^{i2}\ldots Z^{id})$ and ${Z}^{i}dB:=\sum_{j=1}^dZ^{ij}dB^j$;
\item[ii)]  for any $i,j\in \G$, the functions $h_i$, $f_i$ and $g_{ij}$ are deterministic;
\item[iii)] for any $(t,x)\in [0,T]\times\R^k$, the process $X^{t,x}$ is solution of the following SDE:
\[
X^{t,x}_{s}=x+\int_t^{s}b(r,X^{t,x}_r)dr+\int_t^{s}\sigma(r,X^{t,x}_r)dB_r,\quad t\le s\leq T.
\]
\end{itemize}
Since randomness in \eqref{main-systemintro} stems from the Markov process $X^{t,x}$, we say that the system \eqref{main-systemintro} is Markovian.

If for $i=1,...,m$, $f_i$ does not depend on $(y^i)_{i=1,m}$ and $(z^i)_{i=1,m}$, the solution of \eqref{main-systemintro} is linked to an optimal switching problem. The latter is a problem in which a decision maker (or controller) controls a (stochastic) system which may operate in different modes (e.g., a power plant). The aim of the controller is to maximise some performance criterion by optimally choosing controls of the form $\delta:=(\tau_n,\zeta_n)_{n\geq 0}$. Here $(\tau_n)_{n\geq 0}$ denotes an increasing sequence of (stopping) times at which the controller switches the system across different operating modes. Moreover, $(\zeta_n)_{n\ge 0}$ is a sequence of random variables taking their values in $\{1,...,m\}$. Each $\zeta_n$ represents the system's new operating mode after a switch has occurred at time $\tau_n$.

In this setting it is well known (see e.g. \cite{cek, dhp,hamadene-morlais,hutang}, etc.) that $Y^{i}_t$ is the \emph{value} of an optimal switching strategy, i.e., given $\tau_0=t$ and $\zeta_0=i$, it holds
\begin{equation}\label{carac:intro}
Y^{i}_t=\esssup_{\delta:=(\tau_n,\zeta_n)_{n\geq 0}}\E\Big[\int_t^T f_{a_s}(s, X^{t,x}_s)ds-A^\delta_T+h_{a_T}(X^{t,x}_T)\Big|\cF_t\Big]
\end{equation}
where the process $a:=(a_s)_{s\leq T}$ is indicating the mode of the
system at time $s$, $A^\delta_T$ stands for the total switching cost
when the strategy $\delta$ is implemented and, finally,
$h_{a_T}(X^{t,x}_T)$ is the terminal payoff. It is also known that
the solution of \eqref{main-systemintro} enables to construct an
optimal strategy as well.

It is important to remark that a characterization as in
\eqref{carac:intro} also holds in non-Markovian frameworks and
we mention that switching problems often arise in economics, finance
and power system management, amongst many other applied fields (see
e.g. \cite{[BO], [BO1], [BS],[CL],
[DX],[DP],[DZ2],porchet,shi,tri1,tri} and the references therein).

Problems like \eqref{main-systemintro} have been studied at a
theoretical level in the case when, for any $i=1,...,m$, the
function $f_i$ depends only on the state variable $z^i$ and possibly
on $(y^i)_{i\in \G}$ (see, e.g., \cite{cek,hamadene-morlais}). In
that setting existence (and uniqueness) results were provided (also
for the non-Markovian case) by using comparison principles for solutions of BSDEs. Such comparisons do not hold in our framework since $f_i$ depends on $(z^i)_{i\in \Gamma}$, hence we must rely on different methods.

The main objective of this paper is indeed to consider systems in which, for $i=1,...,m$, functions $f_i$ not only depend on the state variable $z^i\in\R^d$ but on all components of the state variable $z:=(z^i)_{i\in\G}$.
In particular we show that if $\sigma \sigma^\top$ is bounded and uniformly elliptic, then \eqref{main-systemintro} has a solution, provided that the switching costs $(g_{ij})_{i,j\in \G}$ are sufficiently regular. We adopt a usual penalization scheme (see \eqref{eq:penalisedBSDE} below) to handle the reflection constraints and rely deeply on essentially three facts: i) the representation of solutions of BSDEs as deterministic functions of $t$ and $X$; ii) smoothness of $g_{ij}$, which enables fundamental bounds in the penalisation scheme; iii) existence of a transition density of $X^{t,x}_s$ for any $s>t$, which satisfies a so-called \emph{domination condition}.

Our work is a first step towards the solution of \eqref{main-systemintro} in general non-Markovian setup. The paper is organized as follows. In Section \ref{sec:problemformulation} we set out the notations and make standing assumptions that hold throughout the paper. In Section \ref{sec:mainresult}, we prove our existence result in a number of steps. First we introduce the penalization scheme associated with \eqref{main-systemintro} and study its properties (in particular we show in Proposition \ref{prop:penalisation} that the time derivative of the penalizing term is uniformly bounded). Then we use an argument based on weak convergence and the aforementioned domination condition (see also \cite{HLP}) to obtain a convergent subsequence of solutions to the penalized problems. We finally show that the limit of such subsequence solves \eqref{main-systemintro} and provide a representation of $(Y^i,Z^i)_{i\in\G}$ as deterministic functions of $(t,X)$. We leave for future investigation questions of uniqueness of the solution and its links to optimal switching problems. The latter will inevitably feature a  more general structure than \eqref{carac:intro}.


\section{Setting and problem formulation}
\label{sec:problemformulation}

\subsection{Setting}
\label{sec:setting}

Let $T$ be a fixed positive real constant, and let $(\Omega,\cF,\P)$ be a probability space on which we define a $d$-dimensional standard Brownian motion $B:=(B_t)_{t\in [0,T]}$. For $t\leq T$, we set $\cF^\circ_t:=\sigma\{B_s, s\le t\}$, the $\sigma$-algebra generated by $B$ up to time $t$, and we denote by $(\cF_t)_{t\leq T}$ the completion of $(\cF^\circ_t)_{t\leq T}$ with the $\P$-null sets of $\cF$. For arbitrary integer numbers $d\ge1$ and $m\ge 1$, we denote by $|\cdot|_{d}$ and $|\cdot|_{m\times d}$ the Euclidean norms in $\R^d$ and $\R^{m\times d}$, respectively. Occasionally, when no confusion may arise, we will simplify our notation using $|\cdot|$ for either $|\cdot|_{d}$ or $|\cdot|_{m\times d}$.
Next, we introduce the following sets:
\begin{itemize}
\item[(i)] $\cP$ is the $\sigma$-algebra of $\cF_t$-progressively measurable sets of $\Omega\times[0,T]$;
\item[(ii)] $\cB(\R^d)$ is the Borel $\sigma$-algebra on $\R^d$, $d \geq 1$;
\item[(iii)] $\textbf{H}^2_T(\R^d):= \{\zeta:=(\zeta_t)_{t\leq T}$ is a $\R^d$-valued, ${\cP}$-measurable process such that $\E\big[\int^T_0{\vert {\zeta_t}\vert}^2 dt\big]<\infty\}$;
\item[(iv)]$\textbf{S}^2_T(\R):= \{\xi:=(\xi_t)_{t\leq T}$ is a $\R$-valued, ${\cP}$-measurable, continuous process such that $\E\big[\sup_{0\leq t\leq T}{\vert{\xi_t}\vert}^2\big]<\infty\}$;
\item[(v)] $\textbf{A}^2_T$ is the subspace of $\textbf{S}^2_T$ of non-decreasing processes which are null at $t=0$.
\item[(vi)] $\mathcal{C}^{1,2}([0,T] \times \R^d)$ (or simply $\mathcal{C}^{1,2}$) is the set of real-valued functions defined on $[0,T] \times \R^d$ which are once continuously differentiable in $t$ and twice continuously differentiable in $x$.
\end{itemize}

Let $X:=(X_s)_{s\leq T}$ be an $(\cF_t)_{t\le T}$-Markov process, valued in $\R^k$, $k\geq1$. For $(t,x)\in[0,T]\times\R^k$ fixed, we denote by $X^{t,x}$ the process $(X_s)_{s\in [t,T]}$ such that $\P(X^{t,x}_t=x)=1$, and by $\mu(t,x;s,dy)$ the law of $X^{t,x}_s$ (for $s\ge t$), i.e., $\P(X^{t,x}_s\in A)=\mu(t,x;s,A)$ for any $A\in \cB(\R^k)$. We now introduce the following condition on the Markov process $X$.
\medskip

\noindent{\bf (A0)} [\emph{${L}^2$-domination condition}]. We say that the process $X$ satisfies the ${L}^2$-domination condition if the family of laws $\{\mu(t,x;s,dy),\ s\in [t,T],\,t\in[0,T],\,x\in\R^k\}$ verifies the following condition: There exists $x_0\in\R^k$ such that, for any $t\in[0,T]$ and $x\in \R^k$ and any $\delta>0$ (such that $\delta+t\leq T$) there exists an application $\phi_{t,x,x_0}^{\delta}: [t,T] \times \R^k \mapsto \R_+$ with the following properties:
\begin{itemize}
\item[(a)] $\mu(t,x;s,dy)ds=\phi_{t,x,x_0}^{\delta}(s,y)\mu(0,x_0;s,dy)ds$ for all $(s,y)\in [t+\delta,T]\times\R^k;$
\item[(b)] $\forall N\geq 1,\ \phi_{t,x,x_0}^{\delta}\in {L}^{2}([t+\delta,T]\times [-N,N]^k;\ \mu(0,x_0;s,dy)ds)$.
\end{itemize}
\medskip

\noindent \textbf{Example}. A Markov process fulfilling the
${L}^2$-domination condition is given by the solution of the
stochastic differential equation
\begin{equation}
\label{cmSDE}
X^{t,x}_s=x+\int_t^s b(r,X^{t,x}_r)dr+\int^s_t\sigma(r,X^{t,x}_r)dB_r,\,\,~s\in [t,T],
\end{equation}
with $(t,x)\in[0,T]\times\R^k$, under the conditions detailed below:
\begin{itemize}
\item[(E1)] We take $k=d$ (recall that $B$ is $d$-dimensional), and the functions $b: [0,T]\times \R^d \mapsto \R^d$ and $\sigma: [0,T]\times \R^d \mapsto \R^{d\times d}$ are jointly continuous in $(t,x)$. Moreover they are Lipschitz continuous in $x$, uniformly with respect to $t$, i.e.~there exists a non-negative constant $C_1$ such that for any $(t,x,x')\in [0,T]\times \R^{d+d}$ we have
\begin{align}\label{eq:Lip0}
|\sigma(t,x)-\sigma(t,x')|_{d\times d}+|b(t,x)-b(t,x')|_d\leq C_1|x-x'|_d.
\end{align}
The above property, together with the joint continuity, imply that
$b$ and $\sigma$ have sub-linear growth in $x$, i.e.~there
is $C_2>0$ such that
\begin{align}\label{eq:sublin}
|b(t,x)|_d+|\sigma(t,x)|_{d\times d}\leq C_2(1+|x|_d).
\end{align}

\item[(E2)]We assume further that $\sigma\sigma^\top$ is uniformly elliptic, i.e., that there exists a constant $\theta>0$ such that for any $(t,x) \in [0,T] \times \R^d$ (denoting by $\langle\cdot,\cdot\rangle_d$ the scalar product in $\R^d$) it holds
$$\theta^{-1}|\zeta|_d^2\leq \langle\sigma(t,x)\sigma(t,x)^\top\zeta,\zeta\rangle_d \leq \theta |\zeta|_d^2,\,\,\zeta \in \R^d.$$
\end{itemize}

Condition $(E1)$ guarantees that the solution of \eqref{cmSDE} exists and it is unique (see, e.g., Chapter 5 of \cite{KS} for more details). Moreover $(E2)$ implies that $\sigma$ is bounded and invertible, with bounded inverse $\sigma^{-1}$. Uniform ellipticity of $\sigma$ also implies (cf.\ \cite{aronson}) that for any $(t,x) \in [0,T] \times \R^d$ the law $\mu(t,x;s,dy)$ of $X^{t,x}_s$ has a density function $p(t,x;s,y)$ such that for every $s>t$ and $y\in \R^d$
\begin{equation}
\label{Aronson}
m(s-t)^{-\frac{d}{2}}\exp\Big\{-\frac{\Lambda|y-x|_d^2}{s-t}\Big\}\leq p(t,x;s,y)\leq M(s-t)^{-\frac{d}{2}}\exp\Big\{-\frac{\lambda|y-x|_d^2}{s-t}\Big\}.
\end{equation}
Here $m$, $M$, $\lambda$ and $\Lambda$ are positive constants such that $m\leq M$ and $\lambda\leq \Lambda$. It is then easily verified that the family $\{\mu(t,x;s,dy),\ s\in [t,T],\,t\in[0,T],\,x\in\R^k\}$ satisfies the $L^2$-domination condition (A0).
\medskip

For future reference we also recall that $(E1)$ above implies that
\begin{equation}
\label{estimx}
\E\big[\sup_{t\le s\leq T}|X^{t,x}_s|_d^{\gamma}\big]\leq C(1+|x|_d^{\gamma}),
\end{equation}
for any $\gamma \geq 1$ and with $C=C(T,\gamma, C_2)>0$, independent of $x$. Moreover, the infinitesimal generator of $X^{t,x}$, denoted by $\mathbb{L}_X$, reads
\begin{align}\label{eq:LX}
\big(\mathbb{L}_X\psi\big)(x)=\frac{1}{2}\sum_{i,j=1}^d((\sigma\sigma^\top)_{ij}\partial^2_{x_ix_j}\psi)(x)+
\sum_{i=1}^d(b_i\partial_{x_i}\psi)(x),
\end{align}
for $\psi \in \cC^{2}(\R^d)$ and for any $x \in \R^d$.

At this point it is worth noticing that the results of this paper hold for a general Markov process $X$ provided that $X$ is a semi-martingale, it satisfies the ${L}^2$-domination condition and \eqref{estimx},
and the increments of the bounded variation part of the processes $(g_{ij}(t,X_t))_{t\in[0,T]}$ are non-positive (see Assumption \textbf{(A2)}-(b) below). However, in order to avoid technicalities and to improve readability of the paper, from now on we make the following standing assumption
\begin{assumption}\label{ass:1}
We assume that $k=d$ and that $X^{t,x}$ is the solution of \eqref{cmSDE} under conditions (E1) and (E2) above, hence satisfying the ${L}^2$-domination condition \textbf{(A0)}.
\end{assumption}

\subsection{A system of reflected BSDEs with interconnected obstacles}
\label{sec:coeff}

Here we formulate the problem object of our study, i.e.~a system of reflected BSDEs with interconnected obstacles. We begin by introducing $\G:=\{1,2,...,m\}$ and functions $(f_i)_{i\in \G}$, $(h_i)_{i\in \G}$ and $(g_{ij})_{i,j\in \G}$ which satisfy the requirements below.
\medskip

\noindent {\bf(A1)} For any $i\in \G$, the function
\[
f_i:\,\,(t,x,(y_k)_{k\in \G},(z_k)_{k\in \G})\in [0,T]\times \R^{d+m+m\times d}\longmapsto f_i(t,x,(y_k)_{k\in \G},(z_k)_{k\in \G}) \in \R
\]
\begin{itemize}
\item[(a)] is Lipschitz continuous in the variables $(\vec{y},\vec{z}):=((y_k)_{k\in \G},(z_k)_{k\in \G})$, uniformly with respect to $(t,x)$; that is, there is $C>0$ such that
\begin{equation}
\label{growthcondition}
|f_i(t,x,\vec{y}_1,z_1)-f_i(t,x,\vec{y}_2,\vec{z_2})|\leq C(|\vec{y}_1-\vec{y}_2|_m+|\vec{z_1}-\vec{z_2}|_{m\times d}),
\end{equation}
for any $(t,x)\in [0,T]\times \R^d$, $(\vec{y}_1,\vec{y}_2)\in (\R^{m})^2$ and $(\vec{z}_1,\vec{z}_2)\in (\R^{m\times d})^2$;
\item[(b)] has sub-polynomial growth in $x$, uniformly with respect to $(t,\vec{y}, \vec{z})$; that is, there are $C>0$ and $q\ge 1$ such that
\begin{equation}
\label{f-bound}
|f_i(t,x,\vec{y},\vec{z})|\leq C(1+|x|_d^q),\qquad\text{for all $(t,x,\vec{y},\vec{z}) \in [0,T]\times \R^{d+m+m\times d}$}.
\end{equation}
\end{itemize}

\noindent{\bf(A2)} For $(i,j) \in \G \times \G$, the functions
\[
{g}_{ij}: (t,x)\in[0,T] \times \R^d \longmapsto g_{ij}(t,x)\in\R_+
\]
have the following properties:
\begin{itemize}
\item[(a)] let $i,j,\ell\in \G$ with $\text{card}\{i,j,\ell\}=3$, then $g_{ij}(t,x)<g_{i\ell}(t,x)+ g_{\ell j}(t,x)$, for any $(t,x) \in [0,T] \times \R^d$. Moreover, $g_{ii}(t,x)=0$;
\item[(b)] for any $i,j\in \G$, $g_{ij}$ belongs to $\cC^{1,2}([0,T]\times \R^d)$ and
\[
\rho_{ij}(t,x):=(\partial_t\,g_{ij}+ \mathbb{L}_Xg_{ij})(t,x)\le 0,\qquad\text{for all $(t,x)\in[0,T]\times\R^d$}.
\]
\end{itemize}

\begin{remark}\
\begin{enumerate}
\item Notice that condition \textbf{(A2)}-(a) implies the so-called non-free loop property which is considered in several papers including \cite{hamadene-morlais, ishi-koike}, among others. Indeed, take a loop of $\G$, i.e., a sequence $\{i_1,.....,i_{\ell}\}$ of $\G$ such that ${\ell}\geq 3$, $\text{card}\{i_1,.....,i_{\ell}\}={\ell}-1$ and $i_{\ell}=i_1$. Then, under \textbf{(A2)}-(a) we have that for any $(t,x) \in [0,T] \times \R^d$
\begin{align*}
g_{i_1i_2}&(t,x)+g_{i_2i_3}(t,x)+\ldots+g_{i_{\ell-1}i_{\ell}}(t,x)\\
&>g_{i_1i_3}(t,x)+g_{i_3i_4}(t,x)+\ldots+g_{i_{\ell-1}i_{\ell}}(t,x)>\ldots >g_{i_1i_1}(t,x)=0.
\end{align*}
\item Conditions \textbf{(A2)} are satisfied if we take, for example, $g_{ij}$ independent of $x$ and of the form $g_{ij}(t,x)=\Phi(t)|i-j|$, with $\Phi$ continuously differentiable on $[0,T]$, non-increasing and positive.
\end{enumerate}
\end{remark}

\noindent {\bf(A3)} For any $i \in \G$ the functions
\[
h_i:x\in \R^d \longmapsto h_i(x)\in \R
\]
are such that for every $x \in \R^d$
\begin{itemize}
\item[(a)] $|h_i(x)| \leq C(1 + |x|_d^p)$, for some non-negative constant $p$;
\item[(b)] $h_i(x)\geq \max_{j\neq i}(h_j(x)-g_{ij}(T,x))$.
\end{itemize}
Condition \textbf{(A3)}-(b) is usually referred to as a ``consistency condition''. This is needed in order for the process $Y$ in \eqref{main-system} below to be continuous on $[0,T]$ (provided that a solutions to \eqref{main-system} exists).

Assuming that conditions \textbf{(A0)}-\textbf{(A3)} hold, we now consider a system of reflected BSDEs with interconnected obstacles associated with
($(f_i)_{i\in \G}$,$(h_i)_{i\in \G}$,$(g_{ij})_{i,j\in \G}$). More precisely we aim at finding a $m$-tuple of $(\mathcal F_t)_{t\le T}$-adapted processes $(Y ^i,Z^i,K^i)_{i\in \G}$ which solves $\P$-a.s.~the following system: For any $i\in \G$, any $(t,x)\in [0,T]\times\R^d$, and all $s\in[t,T]$ it holds
 \begin{align}
 \label{main-system}
\left\{
\begin{array}{ll}
\displaystyle Y^i\in \textbf{S}^2_T(\R),\,\,Z^i\in \textbf{H}^2_T(\R^d)\,\, \mbox{and}\,\, K^i\in \textbf{A}^2_T(\R);\\[+4pt]
 \displaystyle     {Y}^{i}_s=h_i(X^{t,x}_T)\!+\!\!\int^T_s
\!\!f_i(r,X^{t,x}_r,({Y}_r^{k})_{k\in \G},({Z}^{k}_r)_{k\in \G})dr\!+\!{K}^{i}_T-\!{K}^{i}_s-\!\!\int^T_s\!\!{Z}^{i}_rdB_r; \\[+4pt]
\displaystyle      {Y}^{i}_s \geq \max\limits_{j\neq i}\{{Y}^{j}_s-{g}_{ij}(s,X^{t,x}_s)\};\\[+4pt]
\displaystyle \int_t^T\Big( {Y}^{i}_s-\max\limits_{j\neq i}\{{Y}^{j}_s-{g}_{ij}(s,X^{t,x}_s)\}\Big)dK^i_s=0;
\end{array}
\right.
\end{align}
where we recall that $Z^idB:=\sum^d_{j=1}Z^{ij}dB^j$ with $Z^i:=(Z^{i1}, \ldots Z^{id})$.

The rest of the paper is devoted to proving existence of a solution to \eqref{main-system}.


\section{The main result}
\label{sec:mainresult}

In this section we perform an approximation of \eqref{main-system} via a sequence of penalized problems indexed by $n\in\mathbb N$. Each penalized problem admits a solution and we are able to show that, in the limit as $n\to\infty$, we obtain a solution for \eqref{main-system}.

Given $(t,x) \in [0,T]\times \R^d$ and $n \geq 1$ we introduce a system of BSDEs whose solution is a $m$-tuple of $(\mathcal F_t)_{t\le T}$-adapted processes $(Y^{i,n;t,x},Z^{i,n;t,x})_{i \in \G}$ such that for any $i\in\G$:
\begin{align}
\label{eq:penalisedBSDE}
\left\{
\begin{array}{l}
Y^{i,n;t,x}\in \textbf{S}^2_T(\R)\:\text{and}\: Z^{i,n;t,x}\in \textbf{H}^2_T(\R^d); \\[+4pt]
\displaystyle Y^{i,n;t,x}_s  = h_i(X^{t,x}_T) + \int_s^T \Big[f_i(r, X^{t,x}_r, (Y^{k,n;t,x}_r)_{k \in \G}, (Z^{k,n;t,x}_r)_{k \in \G}) \\
\qquad\qquad \displaystyle + n\,\sum_{j\neq i} \Big(Y^{i,n;t,x}_r - Y^{j,n;t,x}_r + g_{ij}(r,X^{t,x}_r)\Big)^{-}\Big] dr  - \int_s^T Z^{i,n;t,x}_r dB_r,\\
\text{for every}\,\,s \in [t,T].
\end{array}
\right.
\end{align}

First we notice that \eqref{eq:penalisedBSDE} admits a unique solution $(Y^{i,n;t,x},Z^{i,n;t,x})_{i \in \G}$ thanks to Pardoux-Peng's result \cite{pardoux-peng}. More precisely: for any $i\in \Gamma$, the random variable $h_i(X^{t,x}_T)$ is square integrable due to \textbf{(A3)} and \eqref{estimx}; moreover, the functions
$${f}^{(n)}_i(t,x,y,z):=f_i(t,x,y,z) + n \sum_{j\neq i} \big(y_i - y_j + g_{ij}(t,x)\big)^{-}$$
are uniformly Lipschitz in $(\vec{y},\vec{z})$ by \textbf{(A1)}.
Next the Markovian nature of our setting also implies that there
exist measurable deterministic functions $(u^{i,n})_{i\in \G}$ and
$(v^{i,n})_{i\in \G}$, with $u^{i,n}:[0,T]\times\R^d\to\R$ and
$v^{i,n}:[0,T]\times\R^d\to\R^d$, such that for any $(t,x) \in
[0,T]\times \R^d$ and $s \in [t,T]$,
\begin{equation}
\label{represent1}
Y^{i,n;t,x}_s=u^{i,n}(s,X^{t,x}_s)\quad \mbox{ and } \quad Z^{i,n;t,x}_s=v^{i,n}(s,X^{t,x}_s).
\end{equation}
One can refer to \cite{karoui-peng-quenez} (Theorem 4.1, p.\ 46) for more details.
Finally, the following representation holds: for any $i\in \G$ and $(t,x) \in [0,T]\times \R^d$ one has
\begin{align}
\label{representation}
u^{i,n}(t,x)=&\E\bigg[h_i(X^{t,x}_T) + \int_t^T \Big\{f_i(r, X^{t,x}_r, (Y^{k,n;t,x}_r)_{k \in \G}, (Z^{k,n;t,x}_r)_{k \in \G}) \\
&\qquad\qquad\qquad+ n\sum_{j\neq i} \Big(Y^{i,n;t,x}_r - Y^{j,n;t,x}_r + g_{ij}(r,X^{t,x}_r)\Big)^{-}\Big\}dr\bigg].\nonumber
\end{align}

In order to simplify notation, from now and when no confusion may arise, we will drop the $(t,x)$-dependence of $(Y^{i,n;t,x},Z^{i,n;t,x})_{i\in \Gamma}$, and we will simply write $(Y^{i,n},Z^{i,n})_{i\in \Gamma}$. Moreover, we will simply denote $f_i(r, X^{t,x}_r, Y^{n}_r, Z^{n}_r)$ with the convention that $Y^{n}:=(Y^{k,n})_{k\in\G}$ and $Z^{n}:=(Z^{k,n})_{k\in\G}$ .
The next proposition provides a bound for the penalizing term in the driver of \eqref{eq:penalisedBSDE}, which is uniform with respect to $n$.
\begin{proposition}
\label{prop:penalisation}
Let $(t,x) \in [0,T] \times \R^d$ be given and fixed. Then, for $q \geq 1$ as in Assumption {\bf(A1)}-(b), there exists $C=C(q,T)>0$ such that, for any $i \in \Gamma$ and $n\geq 1$, one has
\begin{equation}
\label{eq:penalisation}
n \sum_{j\neq i} \Big(Y^{i,n}_s - Y^{j,n}_s + g_{ij}(s,X^{t,x}_s)\Big)^{-} \leq C\big(1 + |X^{t,x}_s|^{q}\big), \quad t\le s\le T.
\end{equation}
\end{proposition}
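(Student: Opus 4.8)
The plan is to exploit the smoothness of $g_{ij}$ (Assumption \textbf{(A2)}-(b)) to bound the penalizing term by comparing each $Y^{i,n}$ against a well-chosen auxiliary process built from the $g_{ij}$'s. The heuristic is the following: if the penalization term were very large for some index $i$, then $Y^{i,n}$ would be forced close to $\max_{j\neq i}(Y^{j,n}-g_{ij})$, and applying the driver's dynamics along an optimally-switched sequence of indices should telescope the switching costs; condition \textbf{(A2)}-(a) (the non-free-loop property) prevents this from cycling, while \textbf{(A2)}-(b) ($\partial_t g_{ij}+\mathbb L_X g_{ij}\le 0$) ensures that applying It\^o's formula to $g_{ij}(s,X^{t,x}_s)$ produces a bounded-variation part with non-positive increments, so it can be absorbed.

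Concretely, first I would fix $(t,x)$ and $n$, and for $s\in[t,T]$ define the random index $i^*=i^*(s,\omega)$ achieving $\max_{i}(Y^{i,n}_s-\text{something})$; more robustly, I would introduce, for a candidate bound, the process $\bar Y^i_s := u^{i,n}(s,X^{t,x}_s)$ and run the following argument. Consider the switching-type comparison: for each $i$, by the structure of the penalized driver, $Y^{i,n}$ dominates the solution of a BSDE with the same driver but with the penalization replaced by a reflection on the obstacle $\max_{j\ne i}(Y^{j,n}-g_{ij})$, and in particular $Y^{i,n}_s \ge Y^{j,n}_s - g_{ij}(s,X^{t,x}_s) - (\text{error})$. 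I would then iterate this inequality along a chain $i\to j\to\ell\to\cdots$ of length at most $m$ (no repeats, by \textbf{(A2)}-(a)), and use It\^o's formula together with \textbf{(A2)}-(b) to control $g_{ij}(s,X^{t,x}_s)$. The key point is that each application of It\^o to $g_{ij}(\cdot,X^{t,x}_\cdot)$ gives
\[
g_{ij}(s,X^{t,x}_s) = g_{ij}(r,X^{t,x}_r) + \int_r^s \rho_{ij}(u,X^{t,x}_u)\,du + \int_r^s (\nabla g_{ij}\,\sigma)(u,X^{t,x}_u)\,dB_u,
\]
and $\rho_{ij}\le 0$ lets us drop the $du$-integral when estimating from above; the martingale part is handled by taking conditional expectations. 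Combining this with the representation \eqref{representation} and the growth bound \textbf{(A1)}-(b) on $f_i$ and \textbf{(A3)}-(a) on $h_i$, the expectation of $Y^{i,n}_s$ (hence of $u^{i,n}(s,X^{t,x}_s)$) is bounded by $C(1+\E[\,\sup_r|X^{t,x}_r|^q\,])$, uniformly in $n$, using \eqref{estimx}; and the analogous lower bound on $Y^{i,n}_s$ shows the negative parts in the penalization term are uniformly controlled by $C(1+|X^{t,x}_s|^q)$.

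An alternative, cleaner route — which I would actually favor writing up — is a direct Feynman--Kac / verification argument: define $\ell^{i,n}_r := n\sum_{j\ne i}(Y^{i,n}_r-Y^{j,n}_r+g_{ij}(r,X^{t,x}_r))^-$, the quantity to be bounded, and observe from \eqref{representation} that
\[
u^{i,n}(t,x)=\E\Big[h_i(X^{t,x}_T)+\int_t^T f_i\,dr+\int_t^T \ell^{i,n}_r\,dr\Big],
\]
so a uniform upper bound on $\E\!\int \ell^{i,n}$ follows once we have a uniform upper bound on $u^{i,n}$ (given the lower bound $\ell^{i,n}\ge 0$ and the polynomial bounds on $f_i,h_i$). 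The uniform upper bound on $u^{i,n}$ I would get from the optimal-switching representation of the penalized system: $u^{i,n}(t,x)\le \sup_{\delta}\E[\int_t^T f_{a_r}(r,X^{t,x}_r,\dots)dr - A^\delta_T + h_{a_T}]$ with $A^\delta$ the cumulative switching cost; using that costs are non-negative and, crucially, that \textbf{(A2)}-(b) makes $s\mapsto g_{ij}(s,X^{t,x}_s)$ a supermartingale so that the cost process controls things in the right direction. Then to get the pointwise-in-$s$ bound \eqref{eq:penalisation} rather than merely an integrated bound, I would apply the same argument started from time $s$ at the point $X^{t,x}_s$ (using the Markov property and the flow property $Y^{i,n;t,x}_s=u^{i,n}(s,X^{t,x}_s)$), which yields $\ell^{i,n}_s\le C(1+|X^{t,x}_s|^q)$ directly.

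The main obstacle I anticipate is making the telescoping/switching comparison rigorous \emph{without} a comparison theorem for the full $Z$-coupled system — since, as the introduction stresses, component-wise comparison fails here. The way around it is that the \emph{penalization} structure itself (the $(\,\cdot\,)^-$ terms) provides the needed one-sided control at the level of each fixed $i$ with the other components frozen, so one only needs standard linear-BSDE comparison with a given driver, applied carefully. Keeping track that all constants depend only on $q,T$ (and the fixed data), not on $n$, is the bookkeeping one has to be disciplined about; the $n$-independence comes precisely from the fact that $g_{ij}\in\cC^{1,2}$ with $\rho_{ij}\le 0$, so the "It\^o correction" never contributes a positive term that would need an $n$-dependent bound.
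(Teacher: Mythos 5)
There is a genuine gap, and it is at the heart of the statement: the proposition asserts a \emph{pointwise-in-$s$} bound on $n\sum_{j\neq i}(\xi^{ij,n}_s)^-$, i.e.\ that the negative part itself decays like $1/n$, and neither of your two routes produces a mechanism for that factor $1/n$. Your favoured route starts the representation \eqref{representation} afresh at $(s,X^{t,x}_s)$ and concludes from a uniform bound on $u^{i,n}$ that $\E\big[\int_s^T \ell^{i,n}_r\,dr\,\big|\,\mathcal F_s\big]\le C(1+|X^{t,x}_s|^q)$; but a uniform bound on the \emph{time integral} of $\ell^{i,n}$ over $[s,T]$ says nothing about the value $\ell^{i,n}_s$ at the single time $s$ (the integrand may be large on a small set), so the step ``which yields $\ell^{i,n}_s\le C(1+|X^{t,x}_s|^q)$ directly'' does not follow. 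In fact the integrated bound is exactly the weaker statement \eqref{faltoff1}, which the paper \emph{deduces from} Proposition \ref{prop:penalisation}, not the other way round. Your first route has the same defect in a different guise: uniform-in-$n$ bounds on $Y^{i,n}$ (or $u^{i,n}$) would control $(\xi^{ij,n}_s)^-$ by $C(1+|X^{t,x}_s|^q)$, but not $n(\xi^{ij,n}_s)^-$. In addition, the auxiliary inputs you invoke are themselves shaky here: the optimal-switching representation and the chain/telescoping comparison are precisely the tools that are unavailable when $f_i$ depends on all components of $z$ (as the introduction stresses), and the ``standard linear-BSDE comparison with the other components frozen'' is never made concrete; even the uniform bound on $u^{i,n}$ is not free, since the paper's own a priori estimate (Proposition \ref{prop:stimaL2}) uses Proposition \ref{prop:penalisation} to control the term $\int Y^{i,n}dK^{i,n}$.

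The missing idea is the one the paper uses: set $\xi^{ij,n}_s=Y^{i,n}_s-Y^{j,n}_s+g_{ij}(s,X^{t,x}_s)$ and apply It\^o--Tanaka to the \emph{exponentially weighted} negative part $e^{-n(u-s)}(\xi^{ij,n}_u)^-$ (see \eqref{eq:stima1}). The weight $e^{-n(u-s)}$ is what converts the time-integral estimates into a pointwise $1/n$ bound, because $\int_s^T e^{-n(u-s)}du\le 1/n$ in \eqref{eq:stima6}. For this to work one must first show that, on $\{\xi^{ij,n}_u<0\}$, the $n$-dependent penalization terms coming from the dynamics of $Y^{i,n}$ and $Y^{j,n}$ combine into something non-positive; this is the algebraic step \eqref{eq:xi1}--\eqref{eq:xi2}, which uses exactly the triangular condition \textbf{(A2)}-(a) and non-negativity of the $g_{ij}$. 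The remaining terms are handled as you anticipated (and this part of your intuition is correct): $\rho_{ij}\le 0$ kills the drift of $g_{ij}$, the consistency condition \textbf{(A3)}-(b) kills the terminal term, the martingale part vanishes under conditional expectation, and \textbf{(A1)}-(b) with \eqref{estimx} gives the polynomial bound. Without the weighted It\^o--Tanaka step and the cancellation \eqref{eq:stima4}, however, the proposal only reaches integrated or $n$-free estimates and does not prove \eqref{eq:penalisation}.
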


\begin{proof}
Fix $(t,x) \in [0,T] \times \R^d$, and for given $i,j \in \Gamma$ and $n \geq 1$, set
\begin{equation}
\label{eq:defxi1}
\xi^{ij,n}_{s}:=Y^{i,n}_{s} - Y^{j,n}_{s} + g_{ij}(s,X^{t,x}_{s}), \qquad s\in [t,T].
\end{equation}
By an application of It\^o-Tanaka's formula (cf.\ \cite{KS}, Chapter 3.7, Theorem 7.1), for every $s\in [t,T]$ we obtain
\begin{align}
\label{eq:stima1}
e^{-n(T-s)}\big(\xi^{ij,n}_{T})^-  =&\, \big(\xi^{ij,n}_{s})^{-} -\! \int_s^T \!{\mathds{1}}_{\{\xi^{ij,n}_{u} < 0\}} e^{-n(u-s)} d\xi^{ij,n}_{u} -  \int_s^T n e^{-n(u-s)} \big(\xi^{ij,n}_{u}\big)^- du \nonumber \\
& + \frac{1}{2}\int_s^T e^{-n(u-s)} dL^0_u(\xi^{ij,n}),
\end{align}
where $L^0(\xi^{ij,n})$ denotes the local-time at zero of the semimartingale $\xi^{ij,n}$. Noticing that the integral with respect to the local-time is nonnegative, we obtain from \eqref{eq:stima1} that for every $s\in [t,T]$
\begin{align}
\label{eq:stima2}
\big(\xi^{ij,n}_{s}\big)^{-} \leq&\, e^{-n(T-s)}\big(\xi^{ij,n}_{T})^- \\
&+\! \int_s^T \!\!{\mathds{1}}_{\{\xi^{ij,n}_{u} < 0\}} e^{-n(u-s)} d\xi^{ij,n}_{u} +\!  \int_s^T \!\!n\, e^{-n(u-s)} \big(\xi^{ij,n}_{u}\big)^- du.\nonumber
\end{align}

We now want to find a convenient expression for $d\xi^{ij,n}_{u}$. In the definition of $\xi^{ij,n}$ (cf.\ \eqref{eq:defxi1}) we may express $Y^{i,n}$ and $Y^{j,n}$ in terms of their associated BSDEs \eqref{eq:penalisedBSDE}. This gives, for any $u \in [t,T]$
\begin{align}
\label{eq:defxi}
\xi^{ij,n}_{u}  =& g_{ij}(u, X^{t,x}_u) + (h_i - h_j)(X^{t,x}_T) + \int_u^T (f_i - f_j)(r,X^{t,x}_r,Y^{n}_r,Z^{n}_r)dr \nonumber \\
& + n \sum_{k \neq i} \int_u^T \big(\xi^{ik,n}_{r}\big)^- dr - n \sum_{k \neq j} \int_u^T \big(\xi^{jk,n}_{r}\big)^- dr - \int_u^T (Z^{i,n}_r - Z^{j,n}_r) dB_r.
\end{align}
Then taking the differential with respect to the time variable $u$, and recalling $\rho_{ij}$ from {\bf(A2)}-(b), gives
\begin{align}
\label{eq:diffxi}
d\xi^{ij,n}_{u} =& \sum_{k =1}^d \frac{\partial g_{ij}}{\partial x_k}(u,X^{t,x}_u) \sigma_k(u,X^{t,x}_u)dB_u+ (Z^{i,n}_u - Z^{j,n}_u) dB_u \nonumber \\
&+\rho_{ij}(u,X^{t,x}_u)du - (f_i - f_j)(u,X^{t,x}_u,Y^{n}_u,Z^{n}_u)du \\[+4pt]
&- n \sum_{k \neq i} \big(\xi^{ik,n}_u\big)^- du + n \sum_{k \neq j} \big(\xi^{jk,n}_u\big)^- du . \nonumber
\end{align}
where we have also set $\sigma_k(u,X_u)dB_u:=\sum_{\ell}\sigma_{k\ell}(u,X_u)dB^\ell_u$ to simplify the notation. We multiply \eqref{eq:diffxi} by ${\mathds{1}}_{\{\xi^{ij,n}_{u} < 0\}} e^{-n(u-s)}$ and integrate over $[s,T]$. Then adding
\[
\int_s^T n e^{-n(u-s)} \big(\xi^{ij,n}_{u}\big)^- du
\]
we obtain
\begin{align}
\label{eq:stima3}
 \int_s^T& {\mathds{1}}_{\{\xi^{ij,n}_{u} < 0\}} e^{-n(u-s)} d\xi^{ij,n}_{u} +  \int_s^T n e^{-n(u-s)} \big(\xi^{ij,n}_{u}\big)^- du  \nonumber\\
 =&\! \int_s^T \!\!\!{\mathds{1}}_{\{\xi^{ij,n}_{u} < 0\}} e^{-n(u-s)} \Big[\rho_{ij}(u,X^{t,x}_u) - (f_i - f_j)(u,X^{t,x}_u,Y^{n}_u,Z^{n}_u)\Big] du \nonumber\\
& - n \sum_{k \neq i} \int_s^T\!\!\!  {\mathds{1}}_{\{\xi^{ij,n}_{u} < 0\}} e^{-n(u-s)} \big(\xi^{ik,n}_u\big)^- du + n \sum_{k \neq j} \int_s^T \!\!\! {\mathds{1}}_{\{\xi^{ij,n}_{u} < 0\}} e^{-n(u-s)} \big(\xi^{jk,n}_u\big)^- du   \\
& + \int_s^T n\,e^{-n(u-s)} \big(\xi^{ij,n}_u\big)^- du + M^{ij,n}_{s,T},\nonumber
\end{align}
where we have defined
\begin{equation}
\label{def:M}
M^{ij,n}_{s,T}:=\!\int_s^T \!\!{\mathds{1}}_{\{\xi^{ij,n}_{u} < 0\}} e^{-n(u-s)} \Big[\sum_{k = 1}^d\frac{\partial g_{ij}}{\partial x_k}(u,X^{t,x}_u) \sigma_{k}(u,X^{t,x}_u)dB_u + (Z^{i,n}_u - Z^{j,n}_u)dB_u\Big].
\end{equation}
Notice in particular that $(M^{ij,n}_{t,s})_{s\in[t,T]}$ is indeed a martingale.

Next we provide upper bounds for some of the terms in \eqref{eq:stima3}. First we notice that for $u\in[t,T]$ it holds
\begin{align}
\label{eq:xi1}
& {\mathds{1}}_{\{\xi^{ij,n}_{u} < 0\}}\Big(\big(\xi^{jk,n}_u\big)^- - \big(\xi^{ik,n}_u\big)^- \Big) \leq \mathds{1}_{\{\xi^{ij,n}_{u} < 0\}}\Big(\xi^{jk,n}_u  - \xi^{ik,n}_u \Big)^- \nonumber \\
& = \mathds{1}_{\{Y^{j,n}_u > Y^{i,n}_u + g_{ij}(u,X^{t,x}_u)\}} \Big(Y^{j,n}_u + g_{jk}(u,X^{t,x}_u) - Y^{i,n}_u - g_{ik}(u,X^{t,x}_u)\Big)^- \\
& \leq \mathds{1}_{\{Y^{j,n}_u > Y^{i,n}_u + g_{ij}(u,X^{t,x}_u)\}} \Big(g_{ij}(u,X^{t,x}_u) + g_{jk}(u,X^{t,x}_u) - g_{ik}(u,X^{t,x}_u)\Big)^- =0\nonumber
\end{align}
by Assumption \textbf{(A2)}-(a). Also we notice that
\begin{align}
\label{eq:xi2}
 \mathds{1}_{\{\xi^{ij,n}_{u} < 0\}} \big(\xi^{ji,n}_u\big)^- =& \mathds{1}_{\{Y^{j,n}_u > Y^{i,n}_u + g_{ij}(u,X^{t,x}_u)\}} \Big(Y^{j,n}_u - Y^{i,n}_u + g_{ji}(u,X^{t,x}_u)\Big)^- \nonumber \\
\leq& \mathds{1}_{\{Y^{j,n}_u > Y^{i,n}_u + g_{ij}(u,X^{t,x}_u)\}} \Big(g_{ij}(u,X^{t,x}_u) + g_{ji}(u,X^{t,x}_u)\Big)^- =0
\end{align}
because switching costs are non-negative.

Now, simple algebra and \eqref{eq:xi1}-\eqref{eq:xi2} give
\begin{align}
\label{eq:stima4}
& \sum_{k \neq j} \int_s^T \!\!\! \mathds{1}_{\{\xi^{ij,n}_{u} < 0\}} e^{-n(u-s)} \big(\xi^{jk,n}_u\big)^- du \nonumber\\
&\:\:-  \sum_{k \neq i} \int_s^T\!\!\!  \mathds{1}_{\{\xi^{ij,n}_{u} < 0\}} e^{-n(u-s)} \big(\xi^{ik,n}_u\big)^- du + \!\int_s^T \!\!\!e^{-n(u-s)} \big(\xi^{ij,n}_u\big)^- du\\
&=  \sum_{k \neq i,j} \int_s^T\!\!\!  \mathds{1}_{\{\xi^{ij,n}_{u} < 0\}} e^{-n(u-s)}\Big(\big(\xi^{jk,n}_u\big)^- - \big(\xi^{ik,n}_u\big)^- \Big)du \nonumber\\
&\:\:\:\:+\!  \int_s^T\!\!\! \mathds{1}_{\{\xi^{ij,n}_{u} < 0\}} e^{-n(u-s)} \big(\xi^{ji,n}_u\big)^- du \leq 0.\nonumber
\end{align}
By feeding \eqref{eq:stima4} back into \eqref{eq:stima3} we obtain
\begin{align*}
\int_s^T& \mathds{1}_{\{\xi^{ij,n}_{u} < 0\}} e^{-n(u-s)} d\xi^{ij,n}_{u} +  \int_s^T \!\!n\, e^{-n(u-s)} \big(\xi^{ij,n}_{u}\big)^- du  \\
\leq&\,  M^{ij,n}_{s,T} +\! \int_s^T\!\!\! \mathds{1}_{\{\xi^{ij,n}_{u} < 0\}} e^{-n(u-s)} \Big[\rho_{ij}(u,X^{t,x}_u) - (f_i - f_j)(u,X^{t,x}_u,Y^{n}_u,Z^{n}_u)\Big] du. \nonumber
\end{align*}
The latter may be plugged in \eqref{eq:stima2} to yield
\begin{align}
\label{eq:stima5}
\big(\xi^{ij,n}_{s})^{-}  \leq& e^{-n(T-s)}\Big(h_i(X^{t,x}_T) - h_j(X^{t,x}_T) + g_{ij}(T,X^{t,x}_T)\Big)^- + M^{ij,n}_{s,T} \nonumber\\
& + \int_s^T\!\! \mathds{1}_{\{\xi^{ij,n}_{u} < 0\}} e^{-n(u-s)}\Big[\rho_{ij}(u,X^{t,x}_u) - (f_i - f_j)(u,X^{t,x}_u,Y^{n}_u,Z^{n}_u)\Big] du,
\end{align}
for every $s \in [t,T]$.

By Assumption \textbf{(A3)}-(b) we have that $\Big(h_i(X^{t,x}_T) - h_j(X^{t,x}_T) + g_{ij}(T,X^{t,x}_T)\Big)^-=0$.
Moreover, our assumptions on the switching costs $g_{ij}$ (cf.\ Asssumption \textbf{(A2)}) and on the volatility $\sigma$ (cf.~Assumption \ref{ass:1}), imply that $\E\big[M^{ij,n}_{s,T}\big|\cF_s\big]=0$ (see \eqref{def:M}) and $\rho_{ij}(u,X_u)\le0$. Then, taking conditional expectations with respect to $\mathcal{F}_s$ in \eqref{eq:stima5}, using the sub-polynomial growth of $f_i$ and $f_j$ (cf.\ Assumption \textbf{(A1)}-(b)) and \eqref{estimx}, we obtain that
\begin{align}
\label{eq:stima6}
\big(\xi^{ij,n}_{s})^{-} \leq& \E\bigg[\int_s^T \mathds{1}_{\{\xi^{ij,n}_{u} < 0\}} e^{-n(u-s)}(f_j - f_i)(u,X^{t,x}_u,Y^{n}_u,Z^{n}_u)du\,\Big|\,\mathcal{F}_s\bigg] \nonumber \\
 \leq& \int_s^T \!\!e^{-n(u-s)} c\,\big(1 + \E\big[\sup_{s \leq r \leq u} |X_r|^{q}|\mathcal{F}_s\big]\big) du \leq \frac{c}{n}\big(1 + |X_s|^{q}\big),
\end{align}
for every $s\in[t,T]$, with $q\geq 1$ and for a constant $c=c(T,q)>0$ changing from line to line and independent of $n$.

Recalling \eqref{eq:defxi1} we then conclude that for any $(i,j) \in \Gamma \times \Gamma$ and $n\geq 1$
\[
n \Big(Y^{i,n}_s - Y^{j,n}_s + g_{ij}(s,X^{t,x}_s)\Big)^{-} \leq c\,\big(1 + |X^{t,x}_s|^{q}\big), \quad t\le s\le T.
\]
Taking the summations over all $j\neq i$ and setting $C:=(m-1)c$, we finally obtain
\[
n \sum_{j \neq i}\Big(Y^{i,n}_s - Y^{j,n}_s + g_{ij}(s,X^{t,x}_s)\Big)^{-} \leq C\big(1 + |X^{t,x}_s|^{q}\big), \quad t\le s\le T.
\]
\end{proof}

From now on we denote
\begin{equation}
\label{eq:defK} K^{i,n}_{s}:= n \sum_{j \neq i} \int_t^s
\Big(Y^{i,n}_{r} - Y^{j,n}_{r} + g_{ij}(r,X^{t,x}_{r})\Big)^- dr,
\quad s\in [t,T].
\end{equation}
Thanks to Proposition \ref{prop:penalisation} we are able prove the next uniform estimate on the solution of the penalized problem.
\begin{proposition}
\label{prop:stimaL2}
Let $(t,x) \in[0,T]\times \R^d$ be arbitrary. For any $i \in \Gamma$ and $n\geq 1$ there exist constants $C>0$ and $\rho \geq 1$ independent of $n$ such that
\begin{equation}
\label{eq:stimaL2} \E\bigg[\sup_{t \leq s\leq
T}|Y^{i,n}_s|^2 + \int_t^T |Z^{i,n}_s|^2 ds + |K^{i,n}_T|^2 \bigg]
\leq C(1 + |x|^\rho).
\end{equation}
\end{proposition}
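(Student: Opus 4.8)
The plan is to combine the pointwise bound on the penalising density provided by Proposition~\ref{prop:penalisation} with the classical a priori $L^2$-estimates for BSDEs, carried out as a two-step bootstrap; all constants below depend only on $T$, on $p,q$ (the exponents in \textbf{(A3)}-(a) and \textbf{(A1)}-(b)), on $m$, and on the Lipschitz/growth constants in \textbf{(A1)}--\textbf{(A3)}, but not on $n$, and the exponent will turn out to be $\rho=2\max\{p,q\}$. First I would note that $K^{i,n}$ is absolutely continuous with density $\kappa^{i,n}_s:=n\sum_{j\neq i}\big(Y^{i,n}_s-Y^{j,n}_s+g_{ij}(s,X^{t,x}_s)\big)^-$, so that Proposition~\ref{prop:penalisation} gives $0\le\kappa^{i,n}_s\le C(1+|X^{t,x}_s|^q)$ for every $s\in[t,T]$ and $n\ge1$; hence $K^{i,n}_T=\int_t^T\kappa^{i,n}_r\,dr\le CT\big(1+\sup_{t\le r\le T}|X^{t,x}_r|^q\big)$, and by \eqref{estimx} (with $\gamma=2q$) one gets $\E[|K^{i,n}_T|^2]\le C(1+|x|^{2q})$, uniformly in $n$. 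Similarly, \eqref{estimx} together with \textbf{(A3)}-(a) and \textbf{(A1)}-(b) (the latter being uniform in $(\vec y,\vec z)$) yields $\E[|h_i(X^{t,x}_T)|^2]\le C(1+|x|^{2p})$ and $\E\big[\int_t^T|f_i(r,X^{t,x}_r,Y^n_r,Z^n_r)|^2\,dr\big]\le C(1+|x|^{2q})$, the last bound again uniform in $n$. Denoting by $D_{i,n}$ the sum of these three expectations, we thus have $\sup_n D_{i,n}\le C(1+|x|^{\rho})$, and the pointwise bound on $\kappa^{i,n}$ will be used once more to control the drift contribution $\int Y^{i,n}\,dK^{i,n}$.

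Next I would apply It\^o's formula to $|Y^{i,n}_s|^2$ on $[s,T]$, using \eqref{eq:penalisedBSDE} and $dK^{i,n}_r=\kappa^{i,n}_r\,dr$; the stochastic integral $\int Y^{i,n}Z^{i,n}\,dB$ is a genuine martingale since $Y^{i,n}\in\textbf{S}^2_T$ and $Z^{i,n}\in\textbf{H}^2_T$. Taking expectations and using $2ab\le a^2+b^2$ on the products $Y^{i,n}_rf_i$ and $Y^{i,n}_r\kappa^{i,n}_r$ gives
\[
\E\big[|Y^{i,n}_s|^2\big]+\E\Big[\int_s^T|Z^{i,n}_r|^2\,dr\Big]\le D_{i,n}+2\int_s^T\E\big[|Y^{i,n}_r|^2\big]\,dr,\qquad s\in[t,T].
\]
A backward Gr\"onwall argument then yields $\sup_{t\le s\le T}\E[|Y^{i,n}_s|^2]\le D_{i,n}e^{2T}$, and inserting this back (taking $s=t$ above) gives $\E[\int_t^T|Z^{i,n}_r|^2\,dr]\le D_{i,n}(1+2Te^{2T})$. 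Both bounds are $\le C(1+|x|^{\rho})$ uniformly in $n$.

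It remains to upgrade the bound on $Y^{i,n}$ to a pathwise supremum. To this end I would return to the same It\^o identity, keep $s$ free, bound the two drift products exactly as above, take $\sup_{s\in[t,T]}$ on both sides, and use $\sup_{s}\big|\int_s^T Y^{i,n}_rZ^{i,n}_r\,dB_r\big|\le 2\sup_{s}\big|\int_t^s Y^{i,n}_rZ^{i,n}_r\,dB_r\big|$. Taking expectations and invoking the Burkholder--Davis--Gundy inequality together with Young's inequality,
\[
\E\Big[\sup_{t\le s\le T}\Big|\int_t^s Y^{i,n}_rZ^{i,n}_r\,dB_r\Big|\Big]\le C\,\E\Big[\sup_{t\le r\le T}|Y^{i,n}_r|\Big(\int_t^T|Z^{i,n}_r|^2dr\Big)^{1/2}\Big]\le\tfrac12\E\big[\sup_r|Y^{i,n}_r|^2\big]+C\,\E\Big[\int_t^T|Z^{i,n}_r|^2dr\Big],
\]
so that, after absorbing $\tfrac12\E[\sup_r|Y^{i,n}_r|^2]$ into the left-hand side,
\[
\E\big[\sup_{t\le s\le T}|Y^{i,n}_s|^2\big]\le C\Big(D_{i,n}+\E\Big[\int_t^T|Y^{i,n}_r|^2dr\Big]+\E\Big[\int_t^T|Z^{i,n}_r|^2dr\Big]\Big).
\]
Since $\E[\int_t^T|Y^{i,n}_r|^2dr]\le T\sup_s\E[|Y^{i,n}_s|^2]$, the previous step bounds the right-hand side by $C(1+|x|^{\rho})$ uniformly in $n$, and combining this with the estimate on $\E[|K^{i,n}_T|^2]$ and on $\E[\int_t^T|Z^{i,n}_r|^2dr]$ proves \eqref{eq:stimaL2}.

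The only genuinely non-routine point is the ordering of the bootstrap: one cannot obtain $\E[\sup_s|Y^{i,n}_s|^2]$ in a single step, because the Burkholder--Davis--Gundy term reintroduces $\E[\int|Z^{i,n}|^2]$, which is not yet under control; so one must first close the $L^2$-in-time estimates via Gr\"onwall and only afterwards upgrade to the pathwise supremum, and it is important that the drift term $\int Y^{i,n}\,dK^{i,n}$ be handled through the pointwise density bound of Proposition~\ref{prop:penalisation} rather than through $K^{i,n}_T$ alone. By contrast, the interconnected structure of the system---that is, the dependence of $f_i$ on all components of $(Y^n,Z^n)$---plays no role in this estimate, since the growth bound \eqref{f-bound} is uniform in $(\vec y,\vec z)$; that coupling was the source of difficulty only in Proposition~\ref{prop:penalisation}.
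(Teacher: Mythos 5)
Your argument is correct and essentially the paper's: It\^o's formula for $|Y^{i,n}|^2$, Young's inequality combined with the pointwise penalisation bound of Proposition \ref{prop:penalisation} to control $\int Y^{i,n}\,dK^{i,n}$, Gronwall, and then a Burkholder--Davis--Gundy step for the pathwise supremum (the paper reads the sup bound directly off the BSDE and applies BDG to $\sup_s\big|\int_s^T Z^{i,n}_r\,dB_r\big|$ using the already-established $Z$-estimate, whereas you rerun the It\^o identity and absorb via BDG on $\int Y^{i,n}_r Z^{i,n}_r\,dB_r$ --- a purely cosmetic difference). One small bookkeeping point: after Young's inequality the drift term produces $\E\big[\int_s^T(\kappa^{i,n}_r)^2\,dr\big]$ rather than $\E\big[|K^{i,n}_T|^2\big]$, so $D_{i,n}$ should be defined with the former; since it obeys the same bound $C(1+|x|^{2q})$ by Proposition \ref{prop:penalisation} and \eqref{estimx}, the estimate is unaffected.
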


\begin{proof}
Applying It\^o's formula and recalling \eqref{eq:penalisedBSDE} we
obtain that for every $s \in [t,T]$
\begin{align}
\label{eq:stimaL2-1} |Y^{i,n}_s|^2 + \int_s^T\! |Z^{i,n}_r|^2 dr =&
|h_i(X^{t,x}_T)|^2 + 2 \int_s^T\!\!
Y^{i,n}_r f_i(r, X^{t,x}_r, Y^{n}_r, Z^{n}_r) dr\\
 &- 2 \int_s^T\!\! Y^{i,n}_r Z^{i,n}_r dB_r + 2 \int_s^T\!\! Y^{i,n}_r dK^{i,n}_r.\nonumber
\end{align}
Taking expectations and using the sub-polynomial growth of $h_i$ and $f_i$ (cf.\ Assumptions \textbf{(A1)}-(b) and \textbf{(A3)}-(a)) we get
\begin{align}
\label{eq:stimaL2-2}
\E\bigg[|Y^{i,n}_s|^2 + \int_s^T |Z^{i,n}_r|^2 dr\bigg] \leq& c_1\Big(1 +
\E\big[|X^{t,x}_T|^{2p}\big]\Big)  + 2 c_2 \E\bigg[ \int_s^T \!\!|Y^{i,n}_r|
 \Big(1 + |X^{t,x}_r|^{q}\Big) dr\bigg]\nonumber\\
& + 2 \E\bigg[\int_s^T \!\!|Y^{i,n}_r|\, n \sum_{j \neq i}
\Big(Y^{i,n}_{r} - Y^{j,n}_{r} + g_{ij}(r,X^{t,x}_{r})\Big)^-
dr\bigg]
 \end{align}
for suitable positive constants $c_1$ and $c_2$. We now use the classical inequality
$2|a b| \leq \varepsilon |a|^2 + \frac{1}{\varepsilon}|b|^2$, for any $a,b \in \R$ and $\varepsilon>0$, the bound \eqref{eq:penalisation} (notice that $q$ therein is the same as the one in \eqref{eq:stimaL2-2}) and \eqref{estimx} to obtain
\begin{align}
\label{eq:stimaL2-3} \E\bigg[|Y^{i,n}_s|^2 + \int_s^T
|Z^{i,n}_r|^2 dr\bigg]
\leq & C\Big( 1 + \E\big[|X^{t,x}_T|^{2p}\big] + \E\bigg[\int_s^T\!\! |Y^{i,n}_r|^2 dr+
\int_s^T\!\! |X^{t,x}_r|^{2q}dr\bigg]\Big)\nonumber\\
\leq& C\Big(1 + |x|^{\rho} +  \E\bigg[\int_s^T |Y^{i,n}_r|^2
dr\bigg] \Big),
\end{align}
where $\rho=2(p\vee q)$ and $C=C(T,p,q,\varepsilon)>0$ varies from line to line and it is independent of $n$.

From \eqref{eq:stimaL2-3} and Gronwall's inequality we find $\forall
s\in [t,T]$
\begin{equation}
\label{eq:stimaL2-3bis} \E\big[|Y^{i,n}_s|^2 \big] \leq
C\big(1 + |x|^{\rho}\big),
\end{equation}
for all $n \geq 1$. Letting now $c>0$ be a constant varying from line to line but independent of $n$, using \eqref{eq:stimaL2-3}, \eqref{eq:stimaL2-3bis} and Proposition \ref{prop:penalisation}, we also get
\begin{equation}
\label{eq:stimaL2-3tris} \E\bigg[\int_t^T |Z^{i,n}_r|^2 ds +
|K^{i,n}_T|^2\bigg] \leq c\big(1 + |x|^{\rho}\big).
\end{equation}
The latter and \eqref{eq:stimaL2-3bis} then yield: for any $s\in
[t,T] \times \R^d$,
\begin{equation}
\label{eq:stimaL2-4} \E\bigg[|Y^{i,n}_s|^2 + \int_t^T
|Z^{i,n}_r|^2 dr + |K^{i,n}_T|^2\bigg] \leq c\big(1 +
|x|^{\rho}\big).
\end{equation}

In order to take the supremum of the process $Y^{i,n}$ inside the expectation we need a further bound for $\sup_{t\leq s \leq T}|Y^{i,n}_s|^2$. This can be obtained by using the expression \eqref{eq:penalisedBSDE} for $Y^{i,n}$ together with the sub-polynomial growth of $f_i$ and \eqref{eq:penalisation}, that is
\begin{align}
\label{eq:stimaL2-5}
 \sup_{t\leq s \leq T}|Y^{i,n}_s|^2  \leq& 4\Big(|h_i(X^{t,x}_T)|^2 +\!\!
 \int_t^T \!\!|f_i(r, X^{t,x}_r, Y^{n}_r, Z^{n}_r)|^2 ds \nonumber\\
 &\hspace{+65pt} + |K^{i,n}_T|^2 + \sup_{t\leq s \leq T}\Big|\int_s^T \!\!Z^{i,n}_r dB_r
 \Big|^2\Big)\nonumber\\
 \le &C\Big(1+\sup_{t\le s\le T}|X^{t,x}_s|^\rho+\sup_{t\leq s \leq T}\Big|\int_s^T \!\!Z^{i,n}_r dB_r
 \Big|^2\Big).
\end{align}
Taking the expected value, applying Burkholder-Davis-Gundy's inequality and \eqref{estimx} we finally obtain \eqref{eq:stimaL2}.
\end{proof}

Recall that for each $n\ge0$ we have $Y^{i,n;t,x}_s=u^{i,n}(s,X^{t,x}_s)$ (see \eqref{represent1} and \eqref{representation}). Next we show that the sequences $(u^{i,n})_{n\ge0}$ with $i\in\G$ admit a converging subsequence.
\begin{proposition}
\label{prop:Cauchy-0}
There exists a subsequence $(n_j)_{j\geq 0}$ with $n_j\to\infty$ as $j\to\infty$, and
measurable functions $u^{i}:[0,T]\times\R^d\to\R$, $i\in\G$, such that
\begin{align}\label{eq:lim-u}
\lim_{j\to\infty}u^{i,n_j}(t,x)=u^{i}(t,x)\quad\text{for all $i\in \Gamma$ and $(t,x) \in [0,T] \times \R^d$.}
\end{align}
Moreover there exist two constants $C>0$ and $\rho\ge 1$ (independent of $n_j$) such that for any
$i\in \Gamma$ and $j\ge 0$
\begin{align}
\label{eq:estim-u0}|u^{i,n_j}(t,x)|\leq C(1+|x|^\rho ),\quad \forall (t,x)\in    [0,T] \times\R^d
\end{align}
and therefore 
\begin{align}
\label{eq:estim-u}|u^i(t,x)|\leq C(1+|x|^\rho ),\quad \forall (t,x)\in    [0,T] \times\R^d.
\end{align}

\end{proposition}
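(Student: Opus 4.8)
The plan is to obtain the pointwise convergence \eqref{eq:lim-u} via a diagonal-extraction argument built on the representation \eqref{representation} of $u^{i,n}$ as an expectation, combined with the uniform bounds from Propositions \ref{prop:penalisation} and \ref{prop:stimaL2}. The bound \eqref{eq:estim-u0} I expect to follow immediately from \eqref{eq:stimaL2-3bis}: since $Y^{i,n;t,x}_t=u^{i,n}(t,x)$ and $\P(X^{t,x}_t=x)=1$, evaluating \eqref{eq:stimaL2-3bis} at $s=t$ gives $|u^{i,n}(t,x)|^2=\E[|Y^{i,n}_t|^2]\le C(1+|x|^\rho)$ for every $n$, with $C,\rho$ independent of $n$; and \eqref{eq:estim-u} is then inherited in the limit. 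So the content is really the existence of a convergent subsequence.

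First I would fix a countable dense set $D\subset[0,T]\times\R^d$. On $D$ the sequences $(u^{i,n}(t,x))_{n\ge1}$ are bounded for each $(t,x)$ by \eqref{eq:estim-u0}, so by a Cantor diagonal argument over the countably many points of $D$ and the finitely many indices $i\in\G$, I can extract a single subsequence $(n_j)$ along which $u^{i,n_j}(t,x)$ converges for every $(t,x)\in D$ and every $i$. The remaining task is to upgrade convergence on $D$ to convergence on all of $[0,T]\times\R^d$, and this is where the real work lies: I would need some equicontinuity-type control on the family $(u^{i,n})_n$. The natural route is to use the BSDE representation together with standard a priori estimates: the flow $(t,x)\mapsto X^{t,x}$ depends continuously (in $L^2$, uniformly on compacts) on $(t,x)$ thanks to (E1) and \eqref{estimx}, and the drivers $f_i^{(n)}$ — although their Lipschitz constants in $(\vec y,\vec z)$ blow up in $n$ — have penalizing parts that are uniformly bounded in absolute value by Proposition \ref{prop:penalisation}. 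Hence one can treat $n\sum_{j\ne i}(\xi^{ij,n})^-$ as a bounded (in $n$) perturbation and run the usual stability estimates for BSDEs with random, uniformly integrable drivers, obtaining a modulus of continuity for $u^{i,n}$ in $(t,x)$ that does not depend on $n$. With such uniform equicontinuity on compacts, convergence on the dense set $D$ extends to all of $[0,T]\times\R^d$, defining the limit functions $u^i$, which then automatically satisfy \eqref{eq:estim-u}.

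The step I expect to be the main obstacle is precisely establishing the $n$-uniform regularity (or at least $n$-uniform modulus of continuity in $(t,x)$) of $u^{i,n}$, since the penalized drivers are not Lipschitz uniformly in $n$; the argument must exploit that the bad term enters only through a quantity bounded by $C(1+|X|^q)$ with $C$ independent of $n$ (Proposition \ref{prop:penalisation}), and combine this with the $L^2$-estimates of Proposition \ref{prop:stimaL2} and classical flow estimates for \eqref{cmSDE}. An alternative, if one wants to avoid proving equicontinuity, is to weaken the claim: one could instead derive the subsequential convergence in a weaker topology (e.g. weak convergence of $(Y^{i,n_j},Z^{i,n_j})$ in $\textbf H^2_T$, as hinted in the paper's introduction via the domination condition and \cite{HLP}) and recover pointwise convergence of the deterministic functions afterwards. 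Either way, once $(u^{i,n_j})$ converges pointwise the bounds \eqref{eq:estim-u0} and \eqref{eq:estim-u} are routine, as noted above.
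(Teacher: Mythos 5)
The routine parts of your proposal are fine: deriving \eqref{eq:estim-u0} from \eqref{eq:stimaL2-3bis} at $s=t$ via the representation \eqref{represent1}, and passing the bound to the limit for \eqref{eq:estim-u}, is exactly what the paper does. The problem is that you have reduced the whole statement to an $n$-uniform equicontinuity (modulus of continuity in $(t,x)$) for the family $(u^{i,n})_n$, and you never establish it; in fact this is precisely the step that does not follow from the ingredients you cite, and the paper is structured so as to avoid ever needing it. Proposition \ref{prop:penalisation} bounds the \emph{size} of the penalization term, $n\sum_{j\neq i}(\xi^{ij,n}_s)^-\le C(1+|X^{t,x}_s|^q)$, but says nothing about its oscillation in the space variable. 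To run a stability estimate comparing $u^{i,n}(t,x)$ with $u^{i,n}(t',x')$ you would have to control the difference of the penalization terms along the two flows $X^{t,x}$ and $X^{t',x'}$; viewed through the driver this re-introduces the Lipschitz constant of order $n$ in $\vec y$, and viewed as an exogenous coefficient $\Psi^{i,n}(s,y):=n\sum_{j\ne i}(u^{i,n}-u^{j,n}+g_{ij})^-(s,y)$ it is merely a measurable function with a polynomial bound, for which no uniform-in-$n$ modulus of continuity is available (trying to get one from the $u^{i,n}$ themselves is circular). So the dense-set diagonal extraction plus Arzel\`a--Ascoli upgrade, as proposed, does not close.

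The paper's actual mechanism is the alternative you only mention in passing: fix the reference point $x_0$ of \textbf{(A0)}, show via Propositions \ref{prop:penalisation} and \ref{prop:stimaL2} that the full drivers $F^i_n(r,y)=f_i(r,y,u^n,v^n)+\Psi^{i,n}(r,y)$ are bounded in $L^2([0,T]\times\R^d,\mu(0,x_0;r,dy)dr)$ uniformly in $n$, extract a \emph{weakly} convergent subsequence there, and then, for an arbitrary $(t,x)$, split the representation \eqref{representation} of $u^{i,n_j}(t,x)-u^{i,n_k}(t,x)$ into a short-time piece (of size $C\delta$), a far-field piece (of size $C/N$ by Cauchy--Schwarz and Markov), and a middle piece which, thanks to the density $\phi^\delta_{t,x,x_0}\in L^2$ from the domination condition, is a pairing of $F^i_{n_j}-F^i_{n_k}$ against a fixed $L^2$ test function and hence vanishes by weak convergence. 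This requires no regularity of $u^{i,n}$ whatsoever, only the $L^2$ bound, which is exactly why the domination condition \textbf{(A0)} is assumed. If you want to salvage your write-up, you should replace the equicontinuity step by this weak-compactness-plus-domination argument (or actually prove an $n$-uniform modulus of continuity, which would be a substantially harder and different result than anything stated in the paper).
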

\begin{proof}
The proof is given in two steps.
\medskip

\emph{Step 1.} Let $x_0\in \R^d$ be given and fixed as in {\bf (A0)}. Consider the solution of \eqref{eq:penalisedBSDE} for $(t,x)=(0,x_0)$. By the sub-polynomial growth of $f_i$ (see \eqref{f-bound}), by \eqref{estimx} and \eqref{eq:stimaL2}, we can find $C=C(x_0)>0$ (independent of $n$ and $i\in\G$) such that
\begin{align*}
\E\bigg[\int_0^T \Big|&\,f_i(r, X^{0,x_0}_r, Y^{n;0,x_0}_r, Z^{n;0,x_0}_r) \\
& + n\sum_{\ell\neq i} \Big(Y^{i,n;0,x_0}_r - Y^{\ell,n;0,x_0}_r + g_{i\ell}(r,X^{0,x_0}_r)\Big)^{-}\Big|^2 dr\bigg]\leq C.
\end{align*}
Using the representations \eqref{represent1} for $Y^{i,n;0,x_0}$ and $Z^{i,n;0,x_0}$, the above bound reads
\begin{align}
\label{L2bound-1}
\E\bigg[\int_0^T \Big|&\,f_i(r, X^{0,x_0}_r, (u^{k,n}(r,X_r^{0,x_0}))_{k \in \G}, (v^{k,n}(r,X_r^{0,x_0}))_{k \in \G}) \\
& + n\sum_{\ell\neq i} \Big(u^{i,n}(r,X_r^{0,x_0}) - u^{\ell,n}(r,X_r^{0,x_0}) + g_{i\ell}(r,X^{0,x_0}_r)\Big)^{-}\Big|^2 dr\bigg]\leq C. \nonumber
\end{align}
For simplicity we again set $u^{n}(\,\cdot\,):=(u^{k,n}(\,\cdot\,))_{k \in \G}$ and $v^n(\,\cdot\,):=(v^{k,n}(\,\cdot\,))_{k \in \G}$ inside the functions $f_i$, when no confusion may arise.

We can express the expectation in \eqref{L2bound-1} as an integral with respect to the law of $X^{0,x_0}_r$, $r\leq T$. This gives
\begin{align*}
\int_0^T\int_{\R^d}\Big|&\,f_i(r, y,u^{n}(r,y), v^{n}(r,y)) \\
&+ n\sum_{\ell\neq i} \Big(u^{i,n}(r,y) - u^{\ell,n}(r,y) + g_{i\ell}(r,y)\Big)^{-}\Big|^2\mu(0,x_0;r,dy)dr\leq C.
\end{align*}
If we now set
\[
F^{i}_n(r,y):=f_i(r, y,u^{n}(r,y), v^{n}(r,y)) +n\sum_{\ell\neq i} \Big(u^{i,n}(r,y) - u^{\ell,n}(r,y) + g_{i\ell}(r,y)\Big)^{-}
\]
we have that the map $F_n:=(F^{i}_n)_{i\in\G}$, $F_n:[0,T]\times\R^d\to\R^m$ has all its components bounded in $L^2([0,T]\times \R^d, \mu(0,x_0;r,dy)dr)$ uniformly with respect to $n$. Therefore, the sequence $(F_n)_{n\ge 0}$ admits a subsequence $(F_{n_j})_{j\geq 0}$ such that $F^{i}_{n_j}\to F_i$
weakly in $L^2([0,T]\times \R^d, \mu(0,x_0;r,dy)dr)$ as $j\to\infty$, for each $i \in \G$. Notice that the subsequence may depend on $x_0$.
\medskip

\emph{Step 2.} Here we want to prove that \eqref{eq:lim-u} holds along the subsequence $(n_j)_{j\ge0}$ found above. In particular, given $(t,x) \in [0,T] \times \R^d$ we will prove that the sequence $(u^{i,n_j}(t,x))_{j\geq 0}$ is of Cauchy type.

Let $\delta>0$ and $N>0$ be two constants (which will be taken small and large, respectively), and notice that by \eqref{representation} we have, for any non-negative $j,k$
\begin{align}
u^{i,n_j}(t,x)-u^{i,n_k}(t,x)=&\E\bigg[\int_t^{t+\delta} (F^{i}_{n_j}(r,X^{t,x}_r)-F^{i}_{n_k}(r,X^{t,x}_r))dr\bigg]\\
&+\E\bigg[\int_{t+\delta}^T(F^{i}_{n_j}(r,X^{t,x}_r)-F^{i}_{n_k}(r,X^{t,x}_r))1_{\{|X^{t,x}_r|\leq N\}}dr\bigg]\nonumber\\
&+\E\bigg[\int_{t+\delta}^T(F^{i}_{n_j}(r,X^{t,x}_r)-F^{i}_{n_k}(r,X^{t,x}_r))1_{\{|X^{t,x}_r|> N\}} dr\bigg]\nonumber\\
&=:\Theta_1^{jk}+\Theta_2^{jk}+\Theta_3^{jk}.\nonumber
\end{align}

In what follows we let $C=C(t,x)>0$ be a suitable constant (i.e.~sufficiently large for our purposes) independent of $\delta$ and $N$. Due to \eqref{f-bound} and \eqref{eq:penalisation} we easily get $|\Theta_1^{jk}|\leq C\cdot \delta$. Moreover, the bounds in \eqref{f-bound} and \eqref{eq:penalisation}, together with Cauchy-Schwarz and Markov inequalities yield $|\Theta_3^{jk}|\leq C/N$.
Now we use the law of $X^{t,x}$ to rewrite $\Theta_2^{jk}$ as
\begin{align*}
\Theta_2^{jk}=&\E\bigg[\int_{t+\delta}^T(F^{i}_{n_j}(r,X^{t,x}_r)-F^{i}_{n_k}(r,X^{t,x}_r))1_{\{|X^{t,x}_r|\leq N\}} dr\bigg]\\
=&\int_{t+\delta}^T\int_{\R^d}(F^{i}_{n_j}(r,y)-F^{i}_{n_k}(r,y))1_{\{|y|\leq N\}}\mu(t,x;r,dy) dr.\nonumber
\end{align*}
The $L^2$-domination condition \textbf{(A0)} implies
\begin{align}
\Theta_2^{jk}=\int_{t+\delta}^T\int_{\R^d}(F^{i}_{n_j}(r,y)-F^{i}_{n_k}(r,y))1_{\{|y|\leq N\}}\phi_{t,x,x_0}^{\delta}(r,y)\mu(0,x_0;r,dy)dr.
\end{align}
By assumption $\phi_{t,x,x_0}^{\delta}\in L^{2}([t+\delta,T]\times [-N,N]^d;\ \mu(0,x_0;r,dy)dr)$, hence weak convergence of the sequence $(F^{i}_{n_j})_{j\geq 0}$ implies $\limsup_{j,k\rightarrow \infty}|\Theta_2^{jk}|=0$.

Collecting the estimates for $\Theta_1^{jk}$, $\Theta_2^{jk}$ and $\Theta_3^{jk}$ we obtain
\[
\limsup_{j,k\rightarrow \infty}|u^{i,n_j}(t,x)-u^{i,n_k}(t,x)|\leq
C(\delta+ N^{-1})
\]
and, letting $\delta\to 0$ and $N\to\infty$, we complete the proof of \eqref{eq:lim-u}. Finally, estimates \eqref{eq:estim-u0} and \eqref{eq:estim-u} follow by using the representation formula \eqref{represent1} in \eqref{eq:stimaL2-3bis}, with $s=t$, and thanks to \eqref{eq:lim-u}.
\end{proof}

As a byproduct of the previous result we have the following.
\begin{corollary}
\label{cor:Cauchy} For any $i\in \G$ one has
\begin{equation}
\label{eq:Cauchy} \lim_{j,k \rightarrow
\infty}\E\bigg[\int_t^T |Y^{i,n_j}_s - Y^{i,n_k}_s|^2 ds +
\int_t^T |Z^{i,n_j}_s - Z^{i,n_k}_s|^2 ds\bigg]=0.
\end{equation}
\end{corollary}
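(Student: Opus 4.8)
The plan is to obtain the $Y$-part of \eqref{eq:Cauchy} directly from Proposition \ref{prop:Cauchy-0}, and then the $Z$-part via an It\^o-formula computation in which the penalization terms are controlled through Proposition \ref{prop:penalisation}. For the $Y$-part I would use \eqref{represent1} to write $Y^{i,n_j}_s-Y^{i,n_k}_s=u^{i,n_j}(s,X^{t,x}_s)-u^{i,n_k}(s,X^{t,x}_s)$: by \eqref{eq:lim-u} this tends to $0$ as $j,k\to\infty$ for every $s\in[t,T]$ and $\P$-a.s., while by \eqref{eq:estim-u0} it is dominated by $2C(1+\sup_{t\le r\le T}|X^{t,x}_r|^{\rho})\in L^2(\P)$ (thanks to \eqref{estimx}); dominated convergence then yields $\E[\int_t^T|Y^{i,n_j}_s-Y^{i,n_k}_s|^2ds]\to0$ for each $i\in\G$.

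For the $Z$-part, fix $i\in\G$ and apply It\^o's formula to $|Y^{i,n_j}_s-Y^{i,n_k}_s|^2$ on $[t,T]$. Since $Y^{i,n_j}_T=Y^{i,n_k}_T=h_i(X^{t,x}_T)$ the terminal term vanishes; with the shorthand $\Delta Y^i:=Y^{i,n_j}-Y^{i,n_k}$, $\Delta Z^i:=Z^{i,n_j}-Z^{i,n_k}$, $\Delta K^i:=K^{i,n_j}-K^{i,n_k}$ and $\Delta f^i_r:=f_i(r,X^{t,x}_r,Y^{n_j}_r,Z^{n_j}_r)-f_i(r,X^{t,x}_r,Y^{n_k}_r,Z^{n_k}_r)$, after taking expectations (the local martingale $\int\Delta Y^i\,\Delta Z^i\,dB$ is a true martingale by Proposition \ref{prop:stimaL2} and Burkholder--Davis--Gundy) and dropping the nonnegative term $\E[|\Delta Y^i_t|^2]$ I obtain
$$\E\Big[\int_t^T|\Delta Z^i_r|^2dr\Big]\le 2\E\Big[\int_t^T\Delta Y^i_r\,\Delta f^i_r\,dr\Big]+2\E\Big[\int_t^T\Delta Y^i_r\,d\Delta K^i_r\Big].$$
Summing over $i\in\G$, bounding $\sum_i|\Delta Y^i_r||\Delta f^i_r|\le C\sqrt m\,|\vec{\Delta Y}_r|_m(|\vec{\Delta Y}_r|_m+|\vec{\Delta Z}_r|_{m\times d})$ via the Lipschitz property \eqref{growthcondition} (here summing over $i$ is essential, since $f_i$ depends on all $Z$-components), and using Young's inequality to absorb $\tfrac12\sum_i\E[\int_t^T|\Delta Z^i_r|^2dr]$ into the left-hand side, I arrive at
$$\tfrac12\sum_{i\in\G}\E\Big[\int_t^T|\Delta Z^i_r|^2dr\Big]\le C\sum_{k\in\G}\E\Big[\int_t^T|Y^{k,n_j}_r-Y^{k,n_k}_r|^2dr\Big]+2\sum_{i\in\G}\E\Big[\int_t^T\Delta Y^i_r\,d\Delta K^i_r\Big],$$
whose first right-hand term already tends to $0$ by the $Y$-part.

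The heart of the argument is the second term $\sum_i\E[\int_t^T\Delta Y^i_r\,d\Delta K^i_r]$, which cannot be treated by comparison and for which I would rely entirely on Proposition \ref{prop:penalisation}: recalling $\xi^{i\ell,n}$ from \eqref{eq:defxi1}, one has $dK^{i,n}_r=n\sum_{\ell\neq i}(\xi^{i\ell,n}_r)^-dr$ and $n(\xi^{i\ell,n}_r)^-\le C(1+|X^{t,x}_r|^q)$, whence also $(\xi^{i\ell,n}_r)^-\le \tfrac{C}{n}(1+|X^{t,x}_r|^q)$. Using the identity $\Delta Y^i_r=(\xi^{i\ell,n_j}_r-\xi^{i\ell,n_k}_r)+\Delta Y^\ell_r$ (valid for $\ell\neq i$), one decomposes
$$(\xi^{i\ell,n_j}_r)^-\Delta Y^i_r=-\big((\xi^{i\ell,n_j}_r)^-\big)^2-(\xi^{i\ell,n_j}_r)^-(\xi^{i\ell,n_k}_r)^+ +(\xi^{i\ell,n_j}_r)^-(\xi^{i\ell,n_k}_r)^- +(\xi^{i\ell,n_j}_r)^-\Delta Y^\ell_r .$$
After multiplying by $n_j$ and summing over $i$ and $\ell\neq i$, the first two summands are $\le 0$; the third satisfies $n_j(\xi^{i\ell,n_j}_r)^-(\xi^{i\ell,n_k}_r)^-\le \tfrac{C^2}{n_k}(1+|X^{t,x}_r|^q)^2$; the fourth satisfies $n_j(\xi^{i\ell,n_j}_r)^-|\Delta Y^\ell_r|\le C(1+|X^{t,x}_r|^q)|\Delta Y^\ell_r|$. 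Taking expectations and using \eqref{estimx} and Cauchy--Schwarz gives $\sum_i\E[\int_t^T\Delta Y^i_r\,dK^{i,n_j}_r]\le C'/n_k+C''(\sum_\ell\E[\int_t^T|Y^{\ell,n_j}_r-Y^{\ell,n_k}_r|^2dr])^{1/2}$; the same estimate with $n_j$ and $n_k$ interchanged controls $\sum_i\E[\int_t^T(-\Delta Y^i_r)\,dK^{i,n_k}_r]$. Adding the two, together with the $Y$-convergence and $n_k\to\infty$, yields $\sum_i\E[\int_t^T\Delta Y^i_r\,d\Delta K^i_r]\to0$.

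Feeding this back into the previous display shows $\sum_i\E[\int_t^T|\Delta Z^i_r|^2dr]\to0$, and since each summand is nonnegative this gives \eqref{eq:Cauchy} for each $i\in\G$. I expect the treatment of the $\Delta K$-term to be the only genuine obstacle: the key realization is that, by Proposition \ref{prop:penalisation}, each factor $n(\xi^{i\ell,n}_r)^-$ is bounded uniformly in $n$, so in a product of the form $n_j(\xi^{i\ell,n_j}_r)^-(\xi^{i\ell,n_k}_r)^-$ one copy of $n_j$ is absorbed by the first factor while the second factor supplies the decay $O(1/n_k)$, the leftover cross term being handled by the already established $L^2$-convergence of the $Y$'s.
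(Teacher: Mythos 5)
Your proposal is correct and follows essentially the same route as the paper: the $Y$-part by pointwise convergence of the $u^{i,n_j}$ plus dominated convergence, and the $Z$-part by It\^o's formula applied to $|\Delta Y^i|^2$, summation over $i\in\G$ with Young's inequality to absorb the $\Delta Z$-terms, and Proposition \ref{prop:penalisation} to control the penalization terms before invoking the $Y$-convergence. The only difference is your sign decomposition of $\sum_i\E[\int_t^T\Delta Y^i_r\,d\Delta K^i_r]$, which is valid but more elaborate than necessary, since Proposition \ref{prop:penalisation} already gives $|d\Delta K^i_r|\le dK^{i,n_j}_r+dK^{i,n_k}_r\le 2C(1+|X^{t,x}_r|^q)\,dr$, so the term is handled directly by Cauchy--Schwarz and the $Y$-part, exactly as in the paper.
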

\begin{proof} Convergence of the first term in \eqref{eq:Cauchy} follows from the convergence result \eqref{eq:lim-u} and by using the dominated convergence theorem, which is enabled by \eqref{eq:estim-u0} and \eqref{estimx}. Convergence of the second term in
\eqref{eq:Cauchy} is obtained in a classical way. By It\^o's formula
and using the same estimates as in the proof of Proposition
\ref{prop:stimaL2} (and Lipschitz continuity of $f_i$) we get
\begin{align*}
\E&\left[\int_t^T|Z^{i,n_j}_s-Z^{i,n_k}_s|^2\right]\\
\le & 2c\,\varepsilon\,\E\left[\int_t^T\!\!\big(Y^{i,n_j}_s-Y^{i,n_k}_s\big)^2ds\right]+\frac{2c}{\varepsilon}\sum_{\alpha\in\G}\E\left[\int_t^T\!\!\big(|Y^{\alpha,n_j}_s-Y^{\alpha,n_k}_s|^2+|Z^{\alpha,n_j}_s-Z^{\alpha,n_k}_s|^2\big)ds\right]\\
&+2c\,\E\left[\int_t^T|Y^{i,n_j}_s-Y^{i,n_k}_s|(1+|X^{t,x}_s|^q)ds\right],
\end{align*}
for a suitable constant $c>0$ and arbitrary $\varepsilon>0$. Taking the summation over $i\in\G$ and picking $\varepsilon$ sufficiently large we may conclude that
\begin{align*}
\sum_{\alpha\in\G}\E&\left[\int_t^T|Z^{\alpha,n_j}_s-Z^{\alpha,n_k}_s|^2\right]\\
\le& c_\varepsilon\sum_{\alpha\in\G}\E\left[\int_t^T\!\!
|Y^{\alpha,n_j}_s-Y^{\alpha,n_k}_s|^2ds+\int_t^T\!\!|Y^{\alpha,n_j}_s-Y^{\alpha,n_k}_s|(1+|X^{t,x}_s|^q)ds\right]
\end{align*}
where $c_\varepsilon>0$ depends on $\varepsilon$ but is independent of $j,k$. Hence taking limits as $j,k\to\infty$ and using the above result we finally obtain \eqref{eq:Cauchy}.
\end{proof}

We can now prove the main result of this paper, which establishes the existence of a solution to system \eqref{main-system}. In what follows the subsequence $(n_j)_{j\ge 0}$ is the same as the one in Proposition \ref{prop:Cauchy-0}.
\begin{theorem}
\label{thm:main}
There exists a solution $(Y^i,Z^i,K^i)_{i \in \Gamma}$ to {\color{red}{\eqref{main-system}}}. Moreover, for any $i\in \Gamma$ and $t\in[0,T]$ it holds
\begin{equation}
\label{eq:convergence}
\lim_{j \rightarrow \infty}\E\bigg[\sup_{t\leq s \leq T}|Y^{i,n_j}_s - Y^{i}_s|^2 + \int_t^T |Z^{i,n_j}_s - Z^{i}_s|^2 ds + \sup_{t\leq s \leq T}|K^{i,n_j}_s - K^{i}_s|^2\bigg]=0.
\end{equation}
\end{theorem}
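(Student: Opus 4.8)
The plan is to pass to the limit along the subsequence $(n_j)_{j\ge0}$ in the penalized system \eqref{eq:penalisedBSDE} and verify that the limit satisfies all four requirements in \eqref{main-system}. I would first define the candidate limiting objects: set $Y^i_s:=u^i(s,X^{t,x}_s)$ with $u^i$ from Proposition \ref{prop:Cauchy-0}, and use Corollary \ref{cor:Cauchy} to extract $Z^i\in\textbf{H}^2_T(\R^d)$ as the $L^2$-limit of $(Z^{i,n_j})$. The $K^i$-process is then \emph{forced} by the BSDE: rearranging \eqref{eq:penalisedBSDE} shows $K^{i,n_j}_s=Y^{i,n_j}_t-Y^{i,n_j}_s-\int_t^s f_i(r,X^{t,x}_r,Y^{n_j}_r,Z^{n_j}_r)dr+\int_t^s Z^{i,n_j}_rdB_r$, and by Corollary \ref{cor:Cauchy}, Lipschitz continuity of $f_i$ (together with the uniform bound \eqref{eq:estimx} and sub-polynomial growth) and the Burkholder--Davis--Gundy inequality, the right-hand side converges in $\textbf{S}^2_T$; call the limit $K^i$. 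Since each $K^{i,n_j}$ is non-decreasing and null at $t$ (see \eqref{eq:defK}), so is $K^i$, and the uniform bound \eqref{eq:stimaL2} passes to the limit to give $K^i\in\textbf{A}^2_T$. This simultaneously establishes the convergence \eqref{eq:convergence} and the first two lines of \eqref{main-system}.

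Next I would verify the obstacle inequality $Y^i_s\ge\max_{j\ne i}\{Y^j_s-g_{ij}(s,X^{t,x}_s)\}$. This is exactly where Proposition \ref{prop:penalisation} does the work: dividing \eqref{eq:penalisation} by $n_j$ gives $\sum_{j\ne i}(Y^{i,n_j}_s-Y^{j,n_j}_s+g_{ij}(s,X^{t,x}_s))^-\le \tfrac{C}{n_j}(1+|X^{t,x}_s|^q)\to0$, and since each summand is non-negative, each $(Y^{i,n_j}_s-Y^{j,n_j}_s+g_{ij}(s,X^{t,x}_s))^-\to0$. Passing to the limit (using $Y^{i,n_j}_s\to Y^i_s$, say along a further a.s.-convergent subsequence or simply pointwise via $u^{i,n_j}\to u^i$) yields $(Y^i_s-Y^j_s+g_{ij}(s,X^{t,x}_s))^-=0$, i.e. the obstacle constraint, for every $s$ by continuity of the limiting processes. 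The consistency condition \textbf{(A3)}-(b) guarantees this is compatible with the terminal datum so that $Y^i$ is continuous up to $T$.

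The remaining, and most delicate, point is the minimality (Skorokhod) condition $\int_t^T(Y^i_s-\max_{j\ne i}\{Y^j_s-g_{ij}(s,X^{t,x}_s)\})dK^i_s=0$. Here I would argue that $dK^{i,n_j}\to dK^i$ weakly as measures on $[t,T]$ (a consequence of the uniform $\textbf{S}^2_T$-convergence of $K^{i,n_j}\to K^i$, which gives convergence at every continuity point and hence weak convergence of the associated Stieltjes measures), while $Y^{i,n_j}_s-\max_{j\ne i}\{Y^{j,n_j}_s-g_{ij}(s,X^{t,x}_s)\}\to Y^i_s-\max_{j\ne i}\{Y^j_s-g_{ij}(s,X^{t,x}_s)\}$ uniformly on $[t,T]$ (again from \eqref{eq:convergence} and continuity of $g_{ij}$). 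Since the integrand is non-negative and the measure is non-negative, and since for each $n_j$ the penalization makes $\int_t^T(Y^{i,n_j}_s-Y^{j,n_j}_s+g_{ij}(s,X^{t,x}_s))^{+}\wedge\big(\text{something}\big)$ ... more precisely: I would use that $dK^{i,n_j}_s=n_j\sum_{j\ne i}(\xi^{ij,n_j}_s)^-ds$ is supported where $\xi^{ij,n_j}_s\le0$, on which $Y^{i,n_j}_s-Y^{j,n_j}_s+g_{ij}(s,X^{t,x}_s)\le0$, combined with the fact that the total mass $K^{i,n_j}_T$ is uniformly bounded, to show $\limsup_j\int_t^T(Y^{i,n_j}_s-\max_{j\ne i}\{Y^{j,n_j}_s-g_{ij}\})dK^{i,n_j}_s\le 0$, and then pass to the limit using the uniform convergence of the integrand against the weakly convergent measures to conclude $\int_t^T(Y^i_s-\max_{j\ne i}\{Y^j_s-g_{ij}\})dK^i_s\le0$; the reverse inequality is automatic since integrand and measure are non-negative. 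I expect this weak-convergence-of-measures argument, and the care needed to justify $dK^{i,n_j}\to dK^i$ and to pair it against the integrand, to be the main obstacle; the rest is a routine passage to the limit using the estimates already established.
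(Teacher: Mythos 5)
Your overall strategy coincides with the paper's: take $Y^i_s=u^i(s,X^{t,x}_s)$, take $Z^i$ as the $\textbf{H}^2_T$-limit from Corollary \ref{cor:Cauchy}, recover $K^i$ from the equation, derive the obstacle constraint from Proposition \ref{prop:penalisation} (your pointwise argument dividing \eqref{eq:penalisation} by $n_j$ is a harmless variant of the paper's $L^2$ argument), and obtain the Skorokhod condition by pairing the weakly convergent measures $dK^{i,n_j}$ with the uniformly convergent integrand and exploiting that $dK^{i,n_j}$ is supported where the integrand is non-positive; this last sign argument is exactly the paper's.

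There is, however, a genuine gap at the first step. You claim that the right-hand side of $K^{i,n_j}_s=Y^{i,n_j}_t-Y^{i,n_j}_s-\int_t^s f_i\,dr+\int_t^s Z^{i,n_j}_r\,dB_r$ converges in $\textbf{S}^2_T$ ``by Corollary \ref{cor:Cauchy}, Lipschitz continuity of $f_i$ and Burkholder--Davis--Gundy''. Those tools control the driver term and the stochastic integral, but the right-hand side also contains $-Y^{i,n_j}_s$ itself, and Corollary \ref{cor:Cauchy} only gives convergence of $Y^{i,n_j}$ in $L^2(dt\times d\P)$, not in sup-norm; the sup-norm convergence of $Y^{i,n_j}$ is moreover the first term of \eqref{eq:convergence}, so as written your argument is circular at this point. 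Nor can you get it directly from the difference of the two penalized BSDEs, because the penalization terms $n\sum_{\ell\neq i}(\xi^{i\ell,n})^-$ do not cancel and are only uniformly bounded by \eqref{eq:penalisation}, not small. The missing ingredient is the paper's estimate \eqref{eq:exist}--\eqref{eq:exist2}: apply It\^o's formula to $|Y^{i,n_j}-Y^{i,n_k}|^2$, bound the cross terms coming from $dK^{i,n_j}-dK^{i,n_k}$ by $c\,|Y^{i,n_j}_s-Y^{i,n_k}_s|(1+|X^{t,x}_s|^q)\,ds$ via Proposition \ref{prop:penalisation}, treat the martingale part with Burkholder--Davis--Gundy, and then send $j,k\to\infty$ using Corollary \ref{cor:Cauchy} and Cauchy--Schwarz; this shows $(Y^{i,n_j})_j$ is Cauchy in $\textbf{S}^2_T$, after which your definition of $K^i$ and the full convergence \eqref{eq:convergence} do follow. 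A further small point: to get the $\P$-a.s.\ weak convergence $dK^{i,n_j}\to dK^i$ you should pass to a further subsequence along which the sup-norm convergence holds almost surely (as the paper does via \eqref{eq:limP}), since $\textbf{S}^2_T$-convergence alone only yields convergence in probability.
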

\begin{proof}
The proof is given in two steps. We first prove, in step 1, that
there exists $(Y^i,Z^i,K^i)_{i \in \Gamma}$ satisfying the first
equation in \eqref{main-system} and such that \eqref{eq:convergence}
holds. Then we prove, in step 2, that $(Y^i,Z^i,K^i)_{i \in \Gamma}$
fulfils the second and third conditions in \eqref{main-system} as
well.
\smallskip

\noindent \emph{Step 1.} Let $(t,x) \in [0,T] \times \R^d$ be fixed.
For any $i\in \Gamma$ let us set:
\begin{itemize}
\item[ i)] $Y^{i}_s=u^i(s,X^{t,x}_s), \,\,s\in [t,T]$, with $u^i$ as in \eqref{eq:lim-u};
\item[ii)] $(Z^i_s)_{s\in [t,T]}$ the limit in $\textbf{H}^2_T(\R^d)$ of
$(Z^{i,n_j}_s)_{s\in [t,T]}$ which exists thanks to
\eqref{eq:Cauchy}.
\end{itemize}

\noindent It is clear that 
\[
Y^i_s=u^i(s,X^{t,x}_s)=\lim_{j\to\infty}u^{i,n_j}(s,X^{t,x}_s)=\lim_{j\to\infty}Y^{i,n_j}_s\quad\text{$\P$-a.s.~$\forall s\in[t,T]$}
\]
Let us now show that for any $i\in \Gamma$, the sequence
$(Y^{i,n_j})_{j\ge 0}$ is Cauchy in $\textbf{S}^2_T(\R)$ so that it converges to $Y^i$ in $\textbf{S}^2_T(\R)$. 
By using It\^o's formula, Lipschitz property of $f_i$ and the bound
in Proposition \ref{prop:penalisation}, we can argue in a similar
way to the proof of Proposition \ref{prop:stimaL2} and obtain for
all $u\in[t,T]$
\begin{align}\label{eq:exist}
\sup_{t\le u\le T}&|Y^{i,n_j}_u-Y^{i,n_k}_u|^2+\int_t^T|Z^{i,n_j}_s-Z^{i,n_k}_s|^2\nonumber\\
\le & 2c\,\varepsilon\int_t^T\!\!\big(Y^{i,n_j}_s-Y^{i,n_k}_s\big)^2ds\nonumber\\
&+\frac{2c}{\varepsilon}\sum_{\alpha\in\G}\int_t^T\!\!\big(|Y^{\alpha,n_j}_s-Y^{\alpha,n_k}_s|^2+|Z^{\alpha,n_j}_s-Z^{\alpha,n_k}_s|^2\big)ds\nonumber\\
&+2c\int_t^T|Y^{i,n_j}_s-Y^{i,n_k}_s|(1+|X^{t,x}_s|^q)ds\\
&+\sup_{t\le u\le T}\bigg|\int_u^T(Y^{i,n_j}_s-Y^{i,n_k}_s)(Z^{i,n_j}_s-Z^{i,n_k}_s)dB_s\bigg|,\nonumber
\end{align}
where $c>0$ is a suitable constant independent of $j,k$ and $\varepsilon>0$ is also arbitrary. Notice that by Burkholder-Davis-Gundy's inequality and $|ab|\le \varepsilon|a|^2+\varepsilon^{-1}|b|^2$ we have
\begin{align}\label{eq:exist1}
\E&\left[\sup_{t\le u\le T}\bigg|\int_u^T(Y^{i,n_j}_s-Y^{i,n_k}_s)(Z^{i,n_j}_s-Z^{i,n_k}_s)dB_s\bigg|\right]\nonumber\\
\le& C\E\left[\bigg(\int_t^T(Y^{i,n_j}_s-Y^{i,n_k}_s)^2(Z^{i,n_j}_s-Z^{i,n_k}_s)^2ds\bigg)^{\frac{1}{2}}\right]\\
\le& C\E\left[\sup_{t\le u\le T}|Y^{i,n_j}_u-Y^{i,n_k}_u|\bigg(\int_t^T(Z^{i,n_j}_s-Z^{i,n_k}_s)^2ds\bigg)^{\frac{1}{2}}\right]\nonumber\\
\le& C\varepsilon\E\left[\sup_{t\le u\le T}|Y^{i,n_j}_u-Y^{i,n_k}_u|^2\right]+\frac{C}{\varepsilon}\E\left[\int_t^T(Z^{i,n_j}_s-Z^{i,n_k}_s)^2ds\right],\nonumber
\end{align}
for a suitable $C>0$ independent of $j,k$ and any $\varepsilon>0$. Taking expectations in \eqref{eq:exist} and using \eqref{eq:exist1} (with $\varepsilon<1/C$), after rearranging terms we then obtain
\begin{align}\label{eq:exist2}
\E\bigg[\sup_{t\le u\le T}|Y^{i,n_j}_u-Y^{i,n_k}_u|^2\bigg]\le &\, c_{\varepsilon}\E\bigg[\sum_{\alpha\in\G}\int_t^T\!\!\big(|Y^{\alpha,n_j}_s-Y^{\alpha,n_k}_s|^2+|Z^{\alpha,n_j}_s-Z^{\alpha,n_k}_s|^2\big)ds\bigg]\nonumber\\
&+2c\,\E\bigg[\int_t^T|Y^{i,n_j}_s-Y^{i,n_k}_s|(1+|X^{t,x}_s|^q)ds\bigg],
\end{align}
where $c_\varepsilon>0$ may depend on $\varepsilon>0$ but is independent of $j,k$.
Letting now $j,k\to\infty$ and using Corollary \ref{cor:Cauchy} we obtain that $Y^{i,n_j}$
forms a Cauchy sequence in $\textbf{S}^2_T(\R)$ as claimed.
\vspace{+3pt}

Let us now define $K^i$, $i\in \Gamma$, as:
\begin{align}\label{eq:limK}
K^i_s:=Y^{i}_t-Y^{i}_s-\int_t^s\!\!f_i(u,X^{t,x}_u,(Y^{k}_u)_{k\in\G},(Z^{k}_u)_{k\in\G})du
+\int_t^s\!\!Z^{i}_udB_u,\,\,s\in [t,T]. 
\end{align}
Since $Y^{i,n_j}$ converges in $\textbf{S}^2_T(\R)$, and upon recalling Lipschitz property of $f_i$ and \eqref{eq:Cauchy}, it is easy to verify that $K^i$ is the limit in $\textbf{S}^2_T(\R)$ of the sequence $(K^{i,n_j})_{j\ge0}$ defined by (see \eqref{eq:defK} and
\eqref{eq:penalisedBSDE})
\[
K^{i,n_j}_s=Y^{i,n_j}_t-Y^{i,n_j}_s-\int_t^s\!\!f_i(u,X^{t,x}_u,(Y^{k,n_j}_u)_{k\in\G},(Z^{k,n_j}_u)_{k\in\G})du
+\int_t^s\!\!Z^{i,n_j}_udB_u,\,\,s\in [t,T].
\] 
Hence \eqref{eq:convergence} holds and, by
\eqref{eq:limK}, $(Y^i,Z^i,K^i)_{i\in \Gamma}$ verify the first
equation of \eqref{main-system}.
\smallskip

\noindent \emph{Step 2.} It only remains to show that the second and
third conditions in \eqref{main-system} are satisfied by
$(Y^i,Z^i,K^i)_{i \in \Gamma}$. Proposition \ref{prop:penalisation}
implies that there exists $C>0$ for which
\begin{equation}
\label{faltoff1} \E\bigg[\int_t^T \sum_{j\neq i}
\big(Y^{i,n}_s - Y^{j,n}_s + g_{ij}(s,X^{t,x}_s)\big)^{-} ds \bigg]
\leq \frac{C}{n}.
\end{equation}
Using \eqref{eq:Cauchy} and letting $n \uparrow \infty$ (along the subsequence used in \eqref{eq:Cauchy}) we immediately obtain
\begin{equation}
\label{flatoff2} \E\bigg[\int_t^T \sum_{j\neq i}\big(Y^{i}_s
- Y^{j}_s + g_{ij}(s,X^{t,x}_s)\big)^{-} ds\bigg] =0.
\end{equation}
Hence, for all $i,j\in \Gamma$, $Y^{i}_s \geq Y^{j}_s +
g_{ij}(s,X^{t,x}_s)$, $\P$-a.s.\ for every~$s \in [t,T]$ (recall
that $s\mapsto Y^k_s$, $k\in\G$ is indeed continuous as uniform
limit of continuous processes). In particular
\begin{equation}
\label{flatoff3} Y^{i}_s \geq \max_{j\neq i}\Big(Y^{j}_s -
g_{ij}(s,X^{t,x}_s)\Big), \quad \P-\text{a.s.} \quad \forall s \in
[t,T].
\end{equation}

Thanks to \eqref{eq:convergence}, by Tchebyshev's inequality we have that for any $i\in \Gamma$,
\begin{align}\label{eq:limP}
\lim_{j\to\infty}\P\left(\sup_{t\le s\le
T}\big(|Y^{i,n_j}_s-Y^{i}_s|+|K^{i,n_j}_s-K^{i}_s|\big)\ge
\varepsilon\right)=0
\end{align}
for any $\varepsilon>0$. Moreover, for a.e.~$\omega\in\Omega$ and
for each $j\ge 0$ the map $s\mapsto K^{i,n_j}_s(\omega)$ is
increasing and continuous, hence it is a (random) continuous measure
on $[t,T]$. The same holds for the limit process $K^i$. The uniform
convergence in \eqref{eq:limP} implies that (up to selecting a
subsequence) $K^{i,n_j}(\omega)\to K^{i}(\omega)$ as $j\to\infty$
\emph{in general} in the sense of measures (see \cite[Ch.~3]{S}).
Therefore, for $\P$-a.e.~$\omega\in\Omega$, it holds $d
K^{i,n_j}(\omega)\to d K^{i}(\omega)$ weakly as $j\to\infty$ (see
\cite[Thm.~1, Ch.~3]{S}) and
\begin{align}
\label{eq:flat1}
\lim_{j\to\infty}&\int_t^T \!\!\big[Y^{i,n_j}_s - \max_{k\neq i}\big(Y^{k,n_j}_s - g_{ik}(s,X^{t,x}_s)\big)\big]dK^{i,n_j}_s\nonumber\\
&\quad\quad\quad=\int_t^T \!\!\big[Y^{i}_s - \max_{k\neq
i}\big(Y^{k}_s - g_{ik}(s,X^{t,x}_s)\big)\big] dK^{i}_s,
\quad\P-a.s.
\end{align}
We now notice that the left-hand side of \eqref{eq:flat1} is
non-positive due to \eqref{eq:defK} and the fact that for any $\ell\neq i$ and all
$s\in[t,T]$
\[
\Big(Y^{i,n_j}_s - \max_{k\neq i}\big(Y^{k,n_j}_s - g_{ik}(s,X^{t,x}_s)\big)\Big)\Big(Y^{i,n_j}_s - Y^{\ell,n_j}_s + g_{i\ell}(s,X^{t,x}_s)\Big)^-\le0,\qquad \P-a.s.
\]
However, the right-hand side of \eqref{eq:flat1} is non-negative due to \eqref{flatoff3} and the fact that $K^i$ is increasing. Hence we get
\[
\int_t^T \!\!\big[Y^{i}_s - \max_{k\neq i}\big(Y^{k}_s -
g_{ik}(s,X^{t,x}_s)\big)\big] dK^{i}_s=0, \quad\P-a.s.,
\]
which completes the proof.
\end{proof}

We now provide a corollary of Proposition \ref{prop:Cauchy-0} and Theorem \ref{thm:main}.
\begin{corollary}\label{cor:uv}
There exist measurable deterministic functions $(u^i)_{i\in\G}$ and $(v^i)_{i\in\G}$ with $u^i:[0,T]\times\R^d\to\R$ and $v^i:[0,T]\times\R^d\to\R^d$ such that for any $(t,x)\in[0,T]\times\R^d$
\begin{align}
Y^{i;t,x}_s=u^i(s,X^{t,x}_s)\quad\text{and}\quad Z^{i;t,x}_s=v^i(s,X^{t,x}_s),\quad\P-a.s.~\text{for a.e.}~s\in[t,T].
\end{align}
\end{corollary}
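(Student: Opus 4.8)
The plan is to split the statement into its $Y$-part, which is essentially already proved, and its $Z$-part, which is the substance. For the $Y$-part there is nothing new to do: the functions $u^i$ were constructed in Proposition \ref{prop:Cauchy-0}, and in Step~1 of the proof of Theorem \ref{thm:main} the solution was \emph{defined} by $Y^{i;t,x}_s=u^i(s,X^{t,x}_s)$ for the (arbitrarily) fixed $(t,x)$; since $(t,x)$ was arbitrary, this identity holds on all of $[0,T]\times\R^d$. So from now on I would focus on producing $(v^i)_{i\in\G}$.

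The idea is to build $v^i$ once and for all from the \emph{reference} diffusion $X^{0,x_0}$, with $x_0$ the point fixed in \textbf{(A0)}, and then transfer the representation to an arbitrary $(t,x)$ via the Aronson estimates \eqref{Aronson}. At $(t,x)=(0,x_0)$ we have $Z^{i,n_j;0,x_0}_s=v^{i,n_j}(s,X^{0,x_0}_s)$ by \eqref{represent1}, and Theorem \ref{thm:main} (or Corollary \ref{cor:Cauchy}) applied at $(0,x_0)$ says that $(Z^{i,n_j;0,x_0})_j$ is Cauchy in $\textbf{H}^2_T(\R^d)$. Rewriting the $\textbf{H}^2_T$-norm via the law of $X^{0,x_0}_s$,
\begin{align*}
&\E\Big[\int_0^T\!\!\big|v^{i,n_j}(s,X^{0,x_0}_s)-v^{i,n_k}(s,X^{0,x_0}_s)\big|^2ds\Big]\\
&\qquad=\int_0^T\!\!\!\int_{\R^d}\big|v^{i,n_j}(s,y)-v^{i,n_k}(s,y)\big|^2\mu(0,x_0;s,dy)ds,
\end{align*}
so $(v^{i,n_j})_j$ is Cauchy, hence convergent, in $L^2([0,T]\times\R^d;\mu(0,x_0;s,dy)ds)$. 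Passing to a further subsequence (which I relabel $(n_j)_j$; it still enjoys all the convergences of Proposition \ref{prop:Cauchy-0} and Theorem \ref{thm:main}) we also get convergence $\mu(0,x_0;s,dy)ds$-a.e.; I then define $v^i(s,y)$ to be this pointwise limit where it exists and $0$ elsewhere, so that $v^i:[0,T]\times\R^d\to\R^d$ is Borel measurable.

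It remains to check $Z^{i;t,x}_s=v^i(s,X^{t,x}_s)$ for a general $(t,x)$. By the lower bound in \eqref{Aronson} the density $p(0,x_0;s,y)$ is strictly positive (and finite) for every $s>0$, so $\mu(0,x_0;s,dy)ds$ and Lebesgue measure are mutually absolutely continuous on $(0,T]\times\R^d$; hence $v^{i,n_j}\to v^i$ Lebesgue-a.e.\ on $[0,T]\times\R^d$. Next, for $s>t$ the law $\mu(t,x;s,dy)$ has the density $p(t,x;s,y)$ (again \eqref{Aronson}), so any Lebesgue-null set $A\subseteq[0,T]\times\R^d$ satisfies $\P(X^{t,x}_s\in A_s)=0$ for a.e.\ $s>t$, where $A_s$ is the $s$-section; this yields $v^{i,n_j}(s,X^{t,x}_s)\to v^i(s,X^{t,x}_s)$ for $ds\otimes d\P$-a.e.\ $(s,\omega)$. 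On the other hand $Z^{i,n_j;t,x}\to Z^{i;t,x}$ in $\textbf{H}^2_T(\R^d)$ by Theorem \ref{thm:main}, so along a (further, $(t,x)$-dependent) subsequence $Z^{i,n_j;t,x}_s\to Z^{i;t,x}_s$ for $ds\otimes d\P$-a.e.\ $(s,\omega)$ as well; since $Z^{i,n_j;t,x}_s=v^{i,n_j}(s,X^{t,x}_s)$, uniqueness of a.e.\ limits forces $Z^{i;t,x}_s=v^i(s,X^{t,x}_s)$ for $ds\otimes d\P$-a.e.\ $(s,\omega)$, which is the claim.

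I expect the crux to be precisely the passage from the reference law $\mu(0,x_0;\cdot)$ to an arbitrary law $\mu(t,x;\cdot)$: a priori $v^{i,n_j}$ converges only $\mu(0,x_0;s,dy)ds$-a.e., which carries no information along $X^{t,x}$ unless the two families of laws are comparable. That comparability is exactly what the $L^2$-domination condition \textbf{(A0)} — equivalently, the two-sided Gaussian estimate \eqref{Aronson} — provides, and it is why the \emph{positivity} of the Gaussian lower bound (not merely the existence of a density) is what makes the argument go through.
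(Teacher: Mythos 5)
Your proposal is correct, but it takes a genuinely different route from the paper's own (very short) proof. The paper simply sets $v^i:=\limsup_{j\to\infty}v^{i,n_j}$ along the fixed subsequence of Proposition \ref{prop:Cauchy-0} and then, at any $(s,\omega)$ where $Z^{i,n_j;t,x}_s(\omega)\to Z^{i;t,x}_s(\omega)$, uses the identity $Z^{i,n_j;t,x}_s=v^{i,n_j}(s,X^{t,x}_s)$ from \eqref{represent1} to conclude $v^i(s,X^{t,x}_s(\omega))=Z^{i;t,x}_s(\omega)$; no comparison of the laws $\mu(t,x;\cdot)$ and $\mu(0,x_0;\cdot)$ is involved, so in particular the Aronson bounds \eqref{Aronson} are never used and the argument stays valid in the more general Markov setting mentioned before Assumption \ref{ass:1}. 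You instead build $v^i$ as the a.e.\ limit of $v^{i,n_j}$ under the reference measure $\mu(0,x_0;s,dy)ds$ (via the $\textbf{H}^2_T$-Cauchy property at $(0,x_0)$) and transfer the representation to arbitrary $(t,x)$ through equivalence with Lebesgue measure. What your route buys is a cleaner treatment of the subsequence bookkeeping: the a.e.\ convergence of the \emph{functions} $v^{i,n_j}$ is fixed once and for all, so the further, $(t,x)$-dependent extraction needed to upgrade the $\textbf{H}^2_T$-convergence of $Z^{i,n_j;t,x}$ to a.e.\ convergence is harmless, whereas the paper's $\limsup$ device tacitly requires a.e.\ convergence of $Z^{i,n_j;t,x}_s$ along the full subsequence $(n_j)$. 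What it costs is generality, and here your closing remark is slightly off target: the strict positivity of the Gaussian lower bound is not what is essential. Condition \textbf{(A0)}-(a) alone gives $\mu(t,x;s,dy)ds=\phi^{\delta}_{t,x,x_0}(s,y)\mu(0,x_0;s,dy)ds$ on $[t+\delta,T]\times\R^d$ for every $\delta>0$, hence $\mu(t,x;s,dy)ds\ll\mu(0,x_0;s,dy)ds$ on $(t,T]\times\R^d$, which is all your transfer step needs; routing the argument through Lebesgue measure and \eqref{Aronson} is fine under Assumption \ref{ass:1} but unnecessarily ties the corollary to the uniformly elliptic diffusion case.
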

\begin{proof}
It only remains to show the existence of $v^i$. Recall $v^{i,n}$ from \eqref{represent1} and set $v^i:=\limsup_{j\to\infty}v^{i,n_j}$, where the limit is taken along the subsequence introduced in Proposition \ref{prop:Cauchy-0}. Then, using that $Z^{i,n_j}_s\to Z^{i}_s$, $\P$-a.s.~for a.e.~$s\in[t,T]$, and choosing $(s,\omega)$ such that the convergence indeed holds we find
\[
v^i(s,X_s(\omega))=\limsup_{j\to\infty}v^{i,n_j}(s,X_s(\omega))=\limsup_{j\to\infty}Z^{i,n_{j}}_s(\omega)=Z^i_s(\omega).
\]
\end{proof}
\medskip

\textbf{Acknowledgement}: The authors acknowledge hospitality by
Center for Mathematical Economics (IMW) at Bielefeld University,
D\'epartement de Math\'ematiques at Le Mans Universit\'e and School
of Mathematics at University of Leeds. The first and third named
authors were partially supported by the Heilbronn Institute and the
School of Mathematics at Leeds during the research week
``\emph{Optimal stopping games for ambiguity-averse players}'' (4-9
September 2017). Financial support by the German Research Foundation
(DFG) through the Collaborative Research Centre `Taming uncertainty
and profiting from randomness and low regularity in analysis,
stochastics and their applications' is gratefully acknowledged by
the second named author.


\end{document}